\documentclass{amsart}
\usepackage{amssymb,xspace}
\usepackage{a4wide,tikz-cd}
\usepackage{bbm} 
\theoremstyle{plain}
\newtheorem{nthr}{Theorem}[section]
\newtheorem{theorem}		[nthr]{Theorem}
\newtheorem{proposition}[nthr]{Proposition}
\newtheorem{lemma}		[nthr]{Lemma}
\newtheorem{corollary}	[nthr]{Corollary}

\newtheorem{conjecture} [nthr]{Conjecture}

\theoremstyle{definition}

\def\mat#1{\ensuremath{#1}\xspace}
\def\dmat#1#2{\gdef#1{\mat{#2}}}
\def\oper#1#2{\dmat#1{\operatorname{#2}}}

\dmat\bk{\Bbbk}
\dmat\bA{\mathbb{A}}
\dmat\bC{\mathbb{C}}
\dmat\bF{\mathbb{F}}
\dmat\bG{\mathbb{G}}
\dmat\bH{\mathbb{H}}
\dmat\bL{\mathbb{L}}
\dmat\bN{\mathbb{N}}
\dmat\bQ{\mathbb{Q}}
\dmat\bP{\mathbb{P}}
\dmat\bR{\mathbb{R}}
\dmat\bT{\mathbb{T}}
\dmat\bZ{\mathbb{Z}}
\dmat\cA{\mathcal{A}}
\dmat\cC{\mathcal{C}}
\dmat\cF{\mathcal{F}}
\dmat\cE{\mathcal{E}}
\dmat\cH{\mathcal{H}}
\dmat\cM{\mathcal{M}}
\dmat\cO{\mathcal{O}}
\dmat\cP{\mathcal{P}}
\dmat\cS{\mathcal{S}}
\dmat\cT{\mathcal{T}}
\dmat\cU{\mathcal{U}}
\dmat\cV{\mathcal{V}}
\dmat\cZ{\mathcal{Z}}
\dmat\sA{\mathsf{A}}
\dmat\sH{\mathsf{H}}
\dmat\sI{\mathsf{I}}
\dmat\sJ{\mathsf{J}}
\dmat{\bfM}{\mathbf{M}}

\dmat\al\alpha
\dmat\be\beta
\dmat\ga\gamma
\dmat\de\delta
\dmat\eps{\varepsilon}
\dmat\hi\chi
\dmat\vi\varphi
\dmat\te\theta
\dmat\ka\kappa
\dmat\si\sigma
\dmat\Ga\Gamma
\dmat\Om\Omega
\dmat\ta{\tau}
\dmat\om{\omega}
\dmat\la{\lambda}

\oper\Hom{Hom}
\oper\Ext{Ext}
\oper\GL{GL}
\oper\End{End}
\oper\ch{ch}
\oper\cl{cl}
\oper\rk{rk}
\oper\Aut{Aut}
\oper\Exp{Exp}
\oper\Log{Log}
\oper\Coh{Coh}
\oper\Ind{Ind}
\oper\coker{coker}
\oper\im{im}
\oper\Pic{Pic}
\oper\Res{Res}

\def\oh{\frac12}
\def\iso{\simeq}
\def\ts{\otimes}
\def\lb#1{\mat{\underline{#1}}} 
\def\ub#1{\mat{\overline{#1}}}  
\def\wtl#1{\widetilde{#1}}
\def\br{\linebreak}
\def\mto{\mapsto}
\def\emb{\hookrightarrow}
\def\inv{^{-1}}
\def\bop{\bigoplus}

\def\nil{{\mathrm{nil}}}
\def\sst{{\mathrm{ss}}}
\def\bun{{\mathrm{vec}}}
\def\gen{{\mathrm{gen}}}
\def\coh{{\mathrm{coh}}}

\def\eq#1{\begin{equation}#1\end{equation}}
\def\eql#1#2{\begin{equation}\label{#2}#1\end{equation}}
\def\set#1{\mat{\{#1\}}}
\def\sets#1#2{\mat{\left\{#1\ \right\vert\left.#2\right\}}}
\def\ang#1{\mat{\left\langle#1\right\rangle}}
\def\rbr#1{\left(#1\right)}
\def\sbr#1{\left[#1\right]}
\def\n#1{\mat{\left\lvert#1\right\rvert}}

\def\sb{\subset}

\usepackage{hyperref}

\def\gnil{\mathbf{Nil}}

\def\gfil{\mathbf{Filt}}
\def\one{\mathbbm{1}}
\def\w{z}
\def\z{w}

\begin{document}
\title{Counting Higgs bundles}
\author{Sergey Mozgovoy}
\author[Olivier Schiffmann]{Olivier Schiffmann$^\dag$}
\email{mozgovoy@maths.tcd.ie}
\email{Olivier.Schiffmann@math.u-psud.fr}
\thanks{$^\dag$ partially supported by ANR grant 13-BS01-0001-01}


\begin{abstract}
We prove a closed formula counting semistable twisted Higgs bundles of fixed rank and degree over a smooth projective curve defined over a finite field.
We also prove a formula for the Donaldson-Thomas invariants of the moduli spaces of twisted Higgs bundles over a curve.
\end{abstract}

\maketitle


\section{Introduction}
The goal of this paper is to compute motivic invariants of the moduli spaces of twisted Higgs bundles over a curve.
Our approach is in part related to the approach of the second author \cite{schiffmann_indecomposable} where the case of usual Higgs bundles was considered, but it replaces the geometric deformation argument (only available in the symplectic case, and in high enough characteristic) by an argument involving the Hall algebra of the category of
(possibly twisted) Higgs pairs, which works in all characteristics.

Let $D$ be a divisor of degree $l$ over a smooth projective curve $X$ of genus $g$ over a field \bk.
A $D$-twisted Higgs bundle over $X$ is a pair $(E,\te)$, where $E$ is a vector bundle over $X$ and $\te\in\Hom(E,E(D))$.
A usual Higgs bundle is obtained when $D=K$, the canonical divisor of $X$.
One can define the notion of semistability for these pairs and one can construct 
the moduli stack $\cM_D(r,d)$ of semistable $D$-twisted Higgs bundles over $X$ of rank $r$ and degree $d$.
For $D=K$, it was conjectured by Hausel and Thaddeus \cite[Conj.3.2]{hausel_mirrora}
that motivic invariants $[\cM_D(r,d)]$ are independent of $d$ if $r$ and $d$ are coprime.
A conjectural formula for these invariants for $D=K$ was proposed by Hausel and Rodriguez-Villegas \cite[Conj.5.6]{hausel_mirrora} 
and for arbitrary $D$ with $\deg D\ge 2g-2$ by the first author \cite{mozgovoy_solutions} (see also \cite{chuang_motivic}).
In the case of $D=K$ this conjecture was verified for low ranks
\cite{garcia-prada_motives,gothen_betti}
and for the $y$-genus specialization \cite{garcia-prada_y}.
Some very interesting results for coprime $r$ and $d$ were also obtained in \cite{chaudouard_sura,chaudouard_sur}. There is a slightly more general conjecture about the Donaldson-Thomas invariants of twisted Higgs bundles.
We will denote by $q$ the motive $[\bA^1]$, called the Lefschetz motive (if we are working over a finite field then $q$ denotes the number of elements of this field).

\begin{conjecture}
Let $\sH_D(r,d)=(-q^\oh)^{-lr^2}[\cM_D(r,d)]$ and let the Donaldson-Thomas invariants
$\Om_D(r,d)$ be defined by
\eql{\sum_{d/r=\ta}\sH_D(r,d)\z^r\w^d=\Exp\rbr{\frac{\sum_{d/r=\ta}\Om_D(r,d)\z^r\w^d}{q-1}},\qquad \ta\in\bR.}{eq:Om0}
Then $\Om_D(r,d)$ are independent of $d$ ($d$ is not necessarily coprime to $r$ here)
and are given by the formula in \cite[Conj.3]{mozgovoy_solutions}.
\end{conjecture}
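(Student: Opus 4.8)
The plan is to realise the classes $[\cM_D(r,d)]$, and hence the invariants $\Om_D(r,d)$, via the integration map out of the (completed) motivic Hall algebra $\gcoh(\cA_D)$ of the abelian category $\cA_D$ of $D$-twisted Higgs sheaves on $X$ — pairs $(F,\te)$ with $F\in\Coh X$, possibly with torsion, and $\te\colon F\to F(D)$. The key structural point is that, on the classes $\al=(r,d)$ of Higgs \emph{bundles}, the Euler form of $\cA_D$ is \emph{symmetric}: the long exact sequence relating $\Ext^\bullet_{\cA_D}$ to $\Ext^\bullet_X$ yields $\ang{\al,\be}_{\cA_D}=\chi_X(\al,\be)-\chi_X(\al,\be(D))=-lrr'$, the $\te$-term cancelling the antisymmetric part of $\chi_X$. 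Hence the target $\bT$ of $\int\colon\gcoh(\cA_D)\to\bT$ is a \emph{commutative} power-series ring in $\z,\w$, the sign twist of its product is $(-q^\oh)^{-lrr'}$, and the normalisation $\sH_D(r,d)=(-q^\oh)^{-lr^2}[\cM_D(r,d)]$ is precisely the diagonal twist $(-q^\oh)^{\ang{\al,\al}}$. In particular $\Exp,\Log$ in \eqref{eq:Om0} are the ordinary plethystic operations of a commutative ring and the $\Om_D(r,d)$ are honest Donaldson--Thomas invariants in the sense of Joyce--Song / Kontsevich--Soibelman.

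\textbf{Two reductions.} First, the torsion subsheaf of a Higgs sheaf is $\te$-stable, so every Higgs sheaf of positive rank is uniquely an extension of a Higgs bundle by a torsion Higgs sheaf; thus in $\bT$ the series of all Higgs sheaves equals (series of all Higgs bundles)$\cdot$(series of torsion Higgs sheaves). The torsion factor is local — finite-length Higgs sheaves are exactly $0$-dimensional coherent sheaves on the surface $S=\mathrm{Tot}(\cO_X(D))$ — hence is the classical Feit--Fine / Cohen--Lenstra series of $0$-dimensional sheaves on a smooth surface, explicit in the zeta function of $X$. Second, within Higgs bundles the Harder--Narasimhan filtration gives, in a suitable completion of $\bT$,
\[
\sum_{\al}\,[\text{all twisted Higgs bundles of class }\al]\,\z^r\w^d
\;=\;\prod_{\ta}\Big(\,\sum_{d/r=\ta}\,[\text{semistable of slope }\ta]\,\z^r\w^d\,\Big),
\]
an ordered product over slopes which — the ring being commutative — inverts slope by slope: the ratio of the series of bundles with all HN slopes $\ge\ta_0$ to the one with all HN slopes $>\ta_0$ is, after the normalisation, $\sum_{d/r=\ta_0}\sH_D(r,d)\z^r\w^d$. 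Imposing a lower bound on the HN slopes makes the rank-$(r,d)$ part a \emph{finite} sum, which tames the non-quasicompactness. So it remains to compute, for each $\ta_0$, the series of twisted Higgs bundles with HN slopes $\ge\ta_0$.

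\textbf{Core computation and matching.} This series is $\sum_E q^{\dim\Hom_X(E,E(D))}/\n{\Aut E}$, over $E$ of the given rank and degree with HN slopes $\ge\ta_0$. Stratifying by HN type and using Serre duality, $\dim\Hom_X(E,E(D))=r^2(l+1-g)+\dim\Hom_X(E,E(K-D))$; for $\deg D>2g-2$ one checks that a homomorphism $E\to E(K-D)$ preserves the HN filtration and kills its associated graded, so on each stratum the fibre dimension is controlled by the finitely many semistable HN pieces and the contribution is expressible through volumes of stacks of \emph{vector bundles} on $X$ (the case $D=K$ needs a separate treatment of the extra $\End(E)$-term). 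Summing over strata, the answer is assembled from the partition function of the Hall algebra of $\Coh X$ — equivalently the Kac-type polynomials counting absolutely indecomposable bundles on $X$, equivalently an Eisenstein-series/adelic identity; this is the content obtained for $D=K$ in \cite{schiffmann_indecomposable} by a geometric deformation, now to be re-derived Hall-algebraically so that it works for all $D$ with $\deg D\ge 2g-2$ and in all characteristics. One then matches the resulting closed expression with \cite[Conj.3]{mozgovoy_solutions}: both sides are rewritten as $\Exp$ of explicit sums over compositions $r=r_1+\dots+r_k$ of products of rational functions of $q$ and factors of the zeta function of $X$, and identified term by term. The independence of $\Om_D(r,d)$ from $d$ then follows, since the closed formula for the right-hand side of \eqref{eq:Om0} records $d$ only through which monomials $\z^r\w^d$ occur; together with $\sH_D(r,d)=\sH_D(r,d+r)$ (tensoring by a degree-one line bundle) this yields $\Om_D(r,d)=\Om_D(r,d')$ for all $d,d'$.

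\textbf{Main obstacle.} Two things. First, the analytic bookkeeping: $\gcoh(\cA_D)$ and all the series above must live in a completion adapted to $\ta_0$, with a vanishing statement guaranteeing that only finitely many HN strata contribute in each bidegree, and one must check $\int$ is a well-defined algebra map there. Second, and this is the real heart, the core computation together with its identification with \cite[Conj.3]{mozgovoy_solutions}: performing Schiffmann's volume computation by Hall-algebra rather than geometric means and then recognising the output as the conjectured plethystic expression in the zeta function of $X$ — this is where essentially all the work, geometric and combinatorial, is concentrated.
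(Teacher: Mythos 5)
The statement you are trying to prove is stated in the paper as a \emph{conjecture}, and the authors say explicitly that they do not prove it --- not even the independence of $\Om_D(r,d)$ of $d$. What the paper actually establishes is the machinery in your first two paragraphs (the symmetric Euler form and commutative quantum torus, the integration map, the torsion/vector-bundle factorization, the HN recursion over the truncated categories $\cA^{\ge\ta}_D$, the reduction $\sI^+_D=\sI^+_{K-D}$ via Serre duality and hence to nilpotent pairs for $\deg D>2g-2$, and a closed formula for $\sI^+_{D,\nil}$ in terms of $J_\la$, $H_\la$ and the zeta function). Up to that point your outline faithfully reproduces the paper's route, modulo one inaccuracy: for $\deg D>2g-2$ the reduction is not that maps $E\to E(K-D)$ ``preserve the HN filtration and kill the graded,'' but simply that $\deg(K-D)<0$ forces every such map to be nilpotent, after which the nilpotent count is done by a separate Jordan-type stratification.

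The genuine gap is your third paragraph. The two assertions that actually constitute the conjecture --- that the closed expression, once fed through $\Exp\inv$ slope by slope, produces $\Om_D(r,d)$ independent of $d$, and that it agrees term by term with the formula of \cite[Conj.3]{mozgovoy_solutions} --- are stated as things one ``then matches,'' with no argument. Your reason for $d$-independence (``the closed formula records $d$ only through which monomials $\z^r\w^d$ occur'') is not a proof of anything: every generating series has that property. Note also that the invariants the closed formula computes directly are the truncated $\Om^+_D(r,d)$, which are \emph{not} expected to be independent of $d$ outside the range $d>\binom r2 l$; one only recovers $\Om_D(r,d)$ from $\Om^+_D(r,d)$ in that range and then extends by the periodicity $\Om_D(r,d)=\Om_D(r,d+r)$, so the claimed independence is a nontrivial identity among residues of the rational functions $H_\la$, $J_\la$ at infinitely many shifted degrees. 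That identity, and the identification with the conjectural plethystic formula, is precisely the open content of the statement; as written, your proposal computes the invariants but does not prove the conjecture about them.
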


It is clear that if $r,d$ are coprime then $\Om_D(r,d)=(q-1)\sH_D(r,d)$.
It was proved in \cite[Th.1.5]{schiffmann_indecomposable} that if $D=K$
and $r,d$ are coprime then $(q-1)\sH_K(r,d)=q\sA(r,d)$, the number of absolutely
indecomposable vector bundles over $X$ of rank $r$ and degree $d$. We will see 
in Cor.~\ref{cor:Om=A}
that generally
\[\Om_K(r,d)=q\sA(r,d)\qquad \forall r,d.\]
This result is not surprising as a similar
statement for quiver representations was proved earlier by the first author \cite{mozgovoy_motivicb}. 

In this paper we will not prove the above conjecture, even the independence of $\Om_D(r,d)$ of $d$,
but we will give an explicit formula for these invariants.
Let $\cA^{>\ta}$ be the category of coherent sheaves over $X$ such that their HN-factors have slopes $>\ta$.
Let $\cA^{>\ta}_D$ be the category of $D$-twisted Higgs sheaves $(E,\te)$ with $E\in\cA^{>\ta}$. The categories
$\cA^{\geq \ta}, \cA^{\geq \ta}_D$ are defined in an analogous fashion.
We can define the notion of semistability for the objects in this category
and we can construct the moduli stacks $\cM^{>\ta}_D(r,d)$ of semistable bundles.
From now on we will use the index $+$ instead of $\ge0$
and write $\cA^+$ instead of $\cA^{\ge0}$ and so on. 
Note that $\cM^+_D(r,d)$ is not a substack of $\cM_D(r,d)$,
as not all objects in $\cM^+_D(r,d)$ are semistable in the usual sense.
However, we will show that if $d>\binom r2l$ then
$\cM^+_D(r,d)=\cM_D(r,d)$.

As $\cM_D(r,d)\iso\cM_D(r,d+r)$, it is enough to compute invariants $\cM^+_D(r,d)$ in order to determine all invariants $\cM_D(r,d)$.
It may seem that the above truncation makes things even more complicated.
But this is the only way to make all our counts finite 
and to be able to apply the standard machinery of wall-crossing (going back to Harder and Narasimhan)
to count semistable objects. More precisely, we will prove the following result (we assume that \bk is a finite field)

\begin{theorem}\label{main1}
Let $\cA^+_\bun(r,d)$ denote the set of isomorphism classes of vector bundles $E\in\cA^+$ with $\ch E=(r,d)$ and let
	\begin{align}
\sI^+_D(r,d)&=(-q^\oh)^{-lr^2}\sum_{E\in\cA^+_\bun(r,d)}\frac{[\Hom(E,E(D))]}{[\Aut E]},\\
\sum_{r,d}\sI^+_D(r,d)\z^r\w^d&=\Exp\rbr{\frac{\sum_{r,d}\Om^+_D(r,d)\z^r\w^d}{q-1}},\label{eq:Om1}\\
\sum_{d/r=\ta}\sH^+_D(r,d)\z^r\w^d
&=\Exp\rbr{\frac{\sum_{d/r=\ta}\Om^+_D(r,d)\z^r\w^d}{q-1}}\qquad\forall\ta>0.\label{eq:Om2}	
	\end{align}
If $l=\deg D\ge 2g-2$ then
\[\sH^+_D(r,d)=(-q^\oh)^{-lr^2}[\cM^+_D(r,d)].\]
If moreover $d>\binom r2l$ then
\[\sH_D(r,d)=\sH^+_D(r,d),\qquad \Om_D(r,d)=\Om_D^+(r,d).\]
\end{theorem}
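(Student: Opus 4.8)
The plan is to run the Harder--Narasimhan wall-crossing inside the Hall algebra $\cH$ of the category $\cA^+_D$ of $D$-twisted Higgs sheaves, and then to translate the resulting product formula into the plethystic identities defining $\sH^+_D$ and $\Om^+_D$. Two structural facts are basic: the Euler pairing of the Higgs category,
\[
\chi\bigl((E,\te),(F,\vi)\bigr)=\chi(E,F)-\chi(E,F(D))=-l\,\rk E\,\rk F ,
\]
depends only on the ranks and is symmetric; and there is an integration map $\int$ from a completion of $\cH$ into a $\chi$-twisted power-series ring $R$ in $\z,\w$, sending the class of an object $\cS$ to $\tfrac{[\cS]}{[\Aut\cS]}\,\z^{\rk\cS}\w^{\deg\cS}$. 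The role of the truncation to $\cA^+$ is to make the counts finite: $\cA^+_\bun(r,d)$ is a bounded family, the HN-slopes of $E\in\cA^+_\bun(r,d)$ lying in $[0,d]$, hence finite over $\bk$; so every series below converges $\z,\w$-adically, and modulo any fixed power of $\z,\w$ the wall-crossing involves only finitely many walls.

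Each of the two generating series is an integral of an explicit element of $\cH$. A Higgs sheaf splits canonically as $E=E_{\mathrm{tf}}\oplus E_{\mathrm{tor}}$ with $\te$ preserving the torsion summand, so in $\cH$ the characteristic function of all objects factors as $\one=\one^{\mathrm{tor}}\ast\one^{\bun}$, whence the ``vector-bundle part'' $\one^{\bun}_{(r,d)}$ is a well-defined element; a groupoid count for the action of $\Aut E$ on $\Hom(E,E(D))$ gives
\[
\int\one^{\bun}_{(r,d)}=\Bigl(\sum_{E\in\cA^+_\bun(r,d)}\tfrac{[\Hom(E,E(D))]}{[\Aut E]}\Bigr)\z^r\w^d=(-q^{\oh})^{lr^2}\,\sI^+_D(r,d)\,\z^r\w^d .
\]
On the other hand a semistable object of $\cA^+_D$ of finite slope $\ta>0$ cannot have torsion (its torsion subsheaf lies in $\cA^+_D$ and has slope $+\infty$, hence destabilises), so it is a Higgs bundle, and $\int\one^{\sst,\ta}_{(r,d)}=[\cM^+_D(r,d)]\,\z^r\w^d$; here $\deg D\ge 2g-2$ is what guarantees that $\cM^+_D(r,d)$ is a finite-type stack with linear stabilisers, so that $[\cM^+_D(r,d)]$ is a well-defined class equal to this count.

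Now I apply the HN recursion in $\cH$ (and, wall by wall, in the truncations $\cA^{>\ta}_D\subset\cA^{\ge\ta}_D$): $\one^{\bun}_{(r,d)}=\sum_{\ta_1>\dots>\ta_k>0}\one^{\sst,\ta_1}_{(r_1,d_1)}\ast\cdots\ast\one^{\sst,\ta_k}_{(r_k,d_k)}$, each factor being a semistable Higgs bundle of finite positive slope. Applying $\int$, using that it is an algebra homomorphism, and that the normalisation $(-q^{\oh})^{-lr^2}$ absorbs the self-pairing $\chi(\alpha,\alpha)=-lr^2$ so that the twist in $R$ drops out, one obtains that $\sum_{r,d}\sI^+_D(r,d)\,\z^r\w^d$ equals the ordered product over $\ta>0$ of $\sum_{d/r=\ta}(-q^{\oh})^{-lr^2}[\cM^+_D(r,d)]\,\z^r\w^d$; this product is moreover commutative because $\chi$ is symmetric. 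By the standard wall-crossing dictionary (Reineke, Kontsevich--Soibelman, Joyce), such a factorisation is equivalent to the two identities $\sum_{r,d}\sI^+_D\z^r\w^d=\Exp\rbr{\tfrac{\sum_{r,d}\Om^+_D\z^r\w^d}{q-1}}$ and $\sum_{d/r=\ta}(-q^{\oh})^{-lr^2}[\cM^+_D(r,d)]\,\z^r\w^d=\Exp\rbr{\tfrac{\sum_{d/r=\ta}\Om^+_D\z^r\w^d}{q-1}}$; comparing the latter with the definition of $\sH^+_D$ gives $\sH^+_D(r,d)=(-q^{\oh})^{-lr^2}[\cM^+_D(r,d)]$ for $\ta=d/r>0$. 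Finally, for $d>\binom r2l$ one has $\cM^+_D(r,d)=\cM_D(r,d)$ (established earlier), so $\sH^+_D(r,d)=(-q^{\oh})^{-lr^2}[\cM_D(r,d)]=\sH_D(r,d)$; and inverting the plethystic relations triangularly, $\Om_D(r,d)$ and $\Om^+_D(r,d)$ are given by the same universal polynomial in the $\sH_D(r',d')$, resp.\ $\sH^+_D(r',d')$, over $1\le r'\le r$ with $d'/r'=d/r$, where $d'=r'd/r>\binom{r'}2l$ again holds; hence these agree and $\Om_D(r,d)=\Om^+_D(r,d)$.

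The step I expect to be the main obstacle is the homomorphism property of $\int$ on $\cH$. The category $\cA^+_D$ is an exact subcategory of the category of coherent sheaves with proper support on the surface $S_D=\mathrm{Tot}(\cO_X(D))$, which has cohomological dimension $2$, so the Hall structure constants involve $\Ext^2$-groups and need not, a priori, descend to the numerical lattice---which is exactly what is needed for $\int$ to be an algebra map. Showing that they do, in the products $\one^{\sst,\ta_1}\ast\cdots\ast\one^{\sst,\ta_k}$ that occur and in the wall-by-wall steps $\cA^{\ge\ta}_D\supset\cA^{>\ta}_D$, is where $\deg D\ge 2g-2$ enters: by Serre duality on $S_D$, whose dualising sheaf is $\pi^*\cO_X(K-D)$, the relevant $\Ext^2$-groups are identified with $\Hom$-groups between suitably twisted Higgs sheaves, and one invokes---for $D=K$ the $2$-Calabi--Yau Hall-algebra formalism of Joyce and Kontsevich--Soibelman, and for $\deg D>2g-2$ its twisted analogue---to conclude that $\int$ is indeed a homomorphism.
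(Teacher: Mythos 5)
Your proposal follows essentially the same route as the paper: write the generating series of $\sI^+_D$ as an ordered Hall--algebra product of semistable pieces via the Harder--Narasimhan recursion in $\cA^+_D$, apply the integration map, and compare slope-by-slope with the plethystic definitions of $\Om^+_D$ and $\sH^+_D$; the final reduction of $\Om_D(r,d)=\Om^+_D(r,d)$ to the equalities $\sH_D(r',d')=\sH^+_D(r',d')$ for all $r'\le r$ of the same slope (using that $d>\binom r2l$ forces $d'=r'd/r>\binom{r'}2l$) is also the paper's argument, stated if anything slightly more carefully.

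The one genuine gap is precisely the step you yourself flag as the main obstacle, and the way you propose to close it would not work. In homological dimension $2$ the integration map $[\ub E]\mto(-q^\oh)^{\hi(\ub E,\ub E)}e^{\cl\ub E}/\n{\Aut\ub E}$ is \emph{not} a ring homomorphism, and the $2$-Calabi--Yau formalism of Joyce and Kontsevich--Soibelman (or a ``twisted analogue'') does not rescue it: that machinery produces DT invariants by a different construction and does not assert multiplicativity of this naive map. What is true, and all that is needed, is that $I([\ub F]\circ[\ub E])=I(\ub F)\circ I(\ub E)$ whenever $\Ext^2(\ub E,\ub F)=0$, and this vanishing holds for the specific ordered products occurring in the HN recursion: by Serre duality for twisted Higgs sheaves, $\Ext^2(\ub E,\ub F)\iso\Hom(\ub F,\ub E(K-D))^*$, and since $\deg(K-D)\le0$ one has $\mu(\ub E(K-D))\le\mu(\ub E)<\mu(\ub F)$, so this $\Hom$ space vanishes by semistability of the two factors. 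This elementary slope argument is the actual content of the hypothesis $\deg D\ge 2g-2$; you correctly name the duality and the hypothesis, but substitute an inapplicable black box for the two-line argument. (A minor further point: the finiteness of all the counts comes from the truncation to $\cA^+$ alone; $\deg D\ge 2g-2$ plays no role there.)
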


This result implies that, for $\deg D\ge 2g-2$, in order to determine the invariants
$\sH_D(r,d)$ we need to compute invariants $\sH^+_D(r,d)$.
Denote by $\sI^+_{D,\nil}(r,d)$ analogues of $\sI^+_{D}(r,d)$ that count only
$\te\in\Hom(E,E(D))$ that are nilpotent (see section \ref{sec:nilp}). 
Similarly, we define invariants
$\sH^+_{D,\nil}(r,d)$ and $\Om^+_{D,\nil}(r,d)$.

\vspace{.1in}

The computation of $\Om_D(r,d)$ for $\deg D\ge 2g-2$ may be reduced to the nilpotent case via the following result~:

\begin{theorem}
We have
\begin{enumerate}
	\item $\Om^+_D(r,d)=\Om^+_{K-D,\nil}(r,d)$ if $\deg D>2g-2$.
	\item $\Om^+_K(r,d)=q\Om^+_{0,\nil}(r,d)$.
\end{enumerate}
\end{theorem}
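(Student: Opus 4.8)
The plan is to deduce both identities from a spectral-type decomposition of the Hall algebra of twisted Higgs sheaves together with a Fourier-transform argument on the cotangent side. The starting point is the formula defining $\sI^+_D(r,d)$: it is an integral over the stack of bundles $E\in\cA^+$ of the motivic class of the affine space $\Hom(E,E(D))$, weighted by $1/[\Aut E]$. First I would rewrite $[\Hom(E,E(D))]$ using Serre duality, $\Hom(E,E(D))^\vee\iso\Ext^1(E,E(K-D))$, so that $[\Hom(E,E(D))]=q^{\dim\Hom(E,E(K-D))-\chi(E,E(K-D))}$, where $\chi(E,E(K-D))=\chi(E,E)-l\cdot r^2$ is topological. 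This converts the ``$\theta$ arbitrary'' count for $D$ into an ``$\theta$ arbitrary in the dual twist $K-D$'' count, up to an explicit power of $q$ that is exactly absorbed by the normalisation factors $(-q^\oh)^{-lr^2}$ on both sides. The upshot is an identity $\sI^+_D(r,d)=\sI^+_{K-D}(r,d)$ at the level of these un-stabilised generating series — essentially the statement that the dimension of the total space of the Higgs stack is twist-symmetric — and since $\Exp$ and the passage to $\Om^+$ are formal and invertible, this already gives $\Om^+_D(r,d)=\Om^+_{K-D}(r,d)$.

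The second and more substantial step is the passage from ``all $\theta$'' to ``nilpotent $\theta$''. Here I would use the standard stratification of $\Hom(E,E(D))$ by the generalised eigenvalue behaviour of $\theta$ (viewing $\theta$ as a section of $\mathcal{E}nd(E)\otimes\cO(D)$). When $\deg D>2g-2$, a twisted endomorphism decomposes, after passing to a finite cover or using the spectral correspondence, into a ``scalar part'' landing in $H^0(X,\cO(D))$ and a part that is nilpotent on each generalised eigen-subsheaf; because $\deg(K-D)<0$, the relevant scalar contributions from $H^0(X,\cO(K-D))=0$ collapse and the only surviving stratum on the $(K-D)$-side is the nilpotent one. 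More precisely, one organises the generating series $\sum\sI^+_{K-D}(r,d)\z^r\w^d$ as a product (an $\Exp$) over ``spectral data'', and the $H^0(K-D)=0$ vanishing forces this product to reduce to its nilpotent factor $\sum\sI^+_{K-D,\nil}(r,d)\z^r\w^d$; taking $\Log/(q-1)$ yields $\Om^+_D(r,d)=\Om^+_{K-D,\nil}(r,d)$, which is (1). For (2), with $D=K$ one has $K-D=0$, and $H^0(X,\cO)=\bk$ is one-dimensional: the scalar part of $\theta$ now ranges over a single $\bA^1$, contributing exactly one extra factor of $q$ (and a shift accounted for by the $(-q^\oh)^{-0}$ normalisation being trivial when $l=0$), which produces the relation $\Om^+_K(r,d)=q\,\Om^+_{0,\nil}(r,d)$.

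Concretely, the key steps in order are: (i) apply Serre duality to replace the twist $D$ by $K-D$ in the integrand defining $\sI^+_D$, checking that the prefactors $(-q^\oh)^{-lr^2}$ match; (ii) conclude $\Om^+_D=\Om^+_{K-D}$ for $\deg D>2g-2$ by invertibility of $\Exp$; (iii) stratify $\Hom(E,E(K-D))$ by the scalar part of $\theta$ lying in $H^0(X,\cO(K-D))$, and express the full generating series as an $\Exp$ of the nilpotent one twisted by this $H^0$; (iv) use $\deg(K-D)<0\Rightarrow H^0(K-D)=0$ to collapse to the nilpotent factor, giving (1); (v) redo (iii)–(iv) with $D=K$, where $h^0(\cO)=1$ contributes the single factor $q$, giving (2). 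The main obstacle I expect is step (iii): making the ``spectral decomposition by scalar part'' rigorous at the motivic/Hall-algebra level — one needs that the stack of pairs $(E,\theta)$ with $\theta$ having prescribed scalar part $\mu\in H^0(\cO(K-D))$ is, up to twist by the line bundle determined by $\mu$, isomorphic to the stack with $\theta$ nilpotent, and that these contributions multiply correctly under the plethystic exponential. This is where the hypotheses $\deg D\ge 2g-2$ and the use of $\cA^+$ (to keep all counts finite) are really needed, and where one must be careful that the wall-crossing/HN machinery invoked in Theorem~\ref{main1} applies verbatim to the nilpotent substack.
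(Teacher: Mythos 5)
Your argument for part (1) is essentially the paper's own route. Serre duality gives $h^0(E,E(D))=\hi(E,E(D))+h^0(E,E(K-D))$, and the prefactors $(-q^\oh)^{-lr^2}$ and $(-q^\oh)^{-(2g-2-l)r^2}$ exactly absorb the discrepancy, so $\sI^+_D=\sI^+_{K-D}$ and hence $\Om^+_D=\Om^+_{K-D}$ (this is Proposition \ref{D=K-D}; no hypothesis on $\deg D$ is needed for it). For the nilpotent reduction no stratification is required: when $\deg(K-D)<0$, \emph{every} $\te\in\Hom(E,E(K-D))$ is nilpotent, because the coefficients of its characteristic polynomial are sections of $\cO(i(K-D))$ and $H^0(X,i(K-D))=0$ for all $i\ge1$. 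Note that you invoke only $H^0(K-D)=0$; that alone kills the trace but not the higher coefficients, and a traceless twisted endomorphism need not be nilpotent, so you should quote the vanishing for all $i$. With that correction, $\sI^+_{K-D}=\sI^+_{K-D,\nil}$ termwise and (1) follows as in Corollary \ref{cr:D=K-D,nil}.

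Part (2) contains a genuine gap. Having reduced to comparing the untwisted series $\sI^+_0$ (all endomorphisms) with $\sI^+_{0,\nil}$, you assert that the scalar part of $\te$ ranging over $\bA^1$ "contributes exactly one extra factor of $q$". This is false at the level of the $\sI$-series: an endomorphism of a vector bundle is not scalar-plus-nilpotent (take $E=L_1\oplus L_2$ with $L_1\not\iso L_2$ of equal degree and $\te$ acting by distinct scalars on the two summands), and indeed $\Exp\rbr{\sum q\sA^+/(q-1)}\ne q\,\Exp\rbr{\sum\sA^+/(q-1)}$ -- the rank-$1$ coefficients happen to agree but the rank-$2$ ones already differ. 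The factor $q=[\bA^1]$ sits \emph{inside} the plethystic exponential, multiplying each $\Om$, and extracting it by your method would require the full factorization of $\sum\sI^+_0$ as a product over closed points of $\bA^1$ of nilpotent-type series (decomposition by characteristic polynomial into generalized eigen-subsheaves, with descent for non-split factors). You flag this as "the main obstacle", but it is precisely the content of the statement, so the proof is incomplete as written. The paper takes a different and more elementary route (Theorem \ref{th:Om_K} and Corollary \ref{cor:K0}): using the Krull--Schmidt decomposition $E=\bop E_i^{n_i}$, the identification $\Hom(E,E\ts K)\iso\Ext^1(E,E)^*$, and the count $[\Hom^\nil(E,E)]/[\Aut E]=\prod_i q^{-n_i}/(q\inv)_{n_i}$ from \cite{schiffmann_indecomposable}, it evaluates both generating series in closed form as $\Exp\rbr{\sum\sA^+(r,d)\z^r\w^d/(1-q\inv)}$ and $\Exp\rbr{\sum\sA^+(r,d)\z^r\w^d/(q-1)}$ respectively; the arguments of the two exponentials differ by exactly the factor $q$, giving $\Om^+_K=q\sA^+=q\,\Om^+_{0,\nil}$ without any spectral stratification.
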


The first statement of this theorem is proved in Corollary \ref{cr:D=K-D,nil} (cf.\ \cite[Cor.5.2.3]{chaudouard_sura}).
It is based on a simple observation (see Prop.\ \ref{D=K-D}) that 
\[\sI^+_D(r,d)=\sI^+_{K-D}(r,d)\]
and the fact that all elements of $\Hom(E,E(K-D))$ are nilpotent if $\deg D>2g-2$.
The second statement of this theorem is proved in Corollary \ref{cor:K0}.
It was proved in \cite[Cor.6.1.2]{chaudouard_sura}
and \cite[Th.1.5 and Prop.2.1]{schiffmann_indecomposable} for coprime $r$ and $d$.

\vspace{.1in}

Using techniques of \cite{schiffmann_indecomposable}, 
we will compute the invariants $\sI^+_{D,\nil}(r,d)$ for any divisor $D$ with $\deg D\le0$
-- and hence the invariants
$[\mathcal{M}_D(r,d)]$ for any divisor $D$ with
$\deg D \ge2g-2$.
In order to state our result, we need to introduce new notation.

\vspace{.1in}

Let 
$$Z_X(z)=\sum_{d \geq 0} [S^dX]z^d, \qquad \widetilde{Z}_X(z)=z^{1-g}Z_X(z)$$ 
denote the zeta function of $X$ and its renormalization. For a partition $\lambda=(1^{r_1}, 2^{r_2},\ldots,t^{r_t})$, we set
$$J_{\lambda}(\w)=\prod_{s \in \lambda} Z^*_X(q^{-1-l(s)}\w^{a(s)})$$
where $a(s)$ and $l(s)$ are respectively the arm and the leg lengths of $s\in\la$ \cite[VI.6.14]{macdonald_symmetric} and
\[
Z^*_X(q^{-1}z)
=\begin{cases}
Z_X(q^{-1}z)\qquad & \text{if } z\ne1,q,\\
\Res_{z=1}Z_X(q^{-1}z)=q^{1-g}\frac{[\Pic^0(X)]}{q-1}\qquad & \text{if }z=1.
\end{cases}
\]

Next, write $n=l(\lambda)=\sum_i r_i$,  
$$r_{<i}=\sum_{k<i} r_k, \qquad r_{>i}=\sum_{k>i}r_k, \qquad r_{[i,j]}=\sum_{k=i}^j r_k$$
and consider the rational function
$$L(\w_n, \ldots, \w_1)=\frac{1}{\prod_{i<j} \widetilde{Z}_X\big(\frac{\w_i}{\w_j}\big)} \sum_{\sigma \in \mathfrak{S}_n} \sigma \left[ \prod_{i<j}
\wtl Z_X\rbr{\frac{\w_i}{\w_j}} \cdot \frac{1}{\prod_{i<n} \rbr{1-q\frac{\w_{i+1}}{\w_i}}} \cdot 
\frac{1}{1-\w_1}\right].$$
Denote by $\Res_{\lambda}$ the operator of taking the iterated residue along
\begin{align*}
&\frac{\w_n}{\w_{n-1}}=\frac{\w_{n-1}}{\w_{n-2}}= \cdots = \frac{\w_{2+r_{<t}}}{\w_{1+r_{<t}}}=q^{-1}\\
&\vdots \qquad \qquad \vdots \qquad  \qquad\qquad \vdots\\
&\frac{\w_{r_1}}{\w_{r_1-1}}= \frac{\w_{r_1-1}}{\w_{r_1-2}}= \cdots = \frac{\w_{2}}{\w_{1}}=q^{-1}.
\end{align*}
Put
$$\widetilde{H}_{\lambda}(\w_{1+ r_{<t}}, \ldots, \w_{1+r_{<i}}, \ldots, \w_1)
=\Res_{\lambda}\left[ L(\w_n, \ldots, \w_1) \prod_{\substack{j =1 \\ j \not\in \{r_{\leq i}\}}}^{n}\frac{d\w_j}{\w_j} \right]$$
and finally
$$H_{\lambda}(\w)=\widetilde{H}_{\lambda}(\w^tq^{-r_{<t}}, \ldots, \w^iq^{-r_{<i}}, \ldots, \w).$$
Note that if $r_i=0$ for some $i$ then the function $\widetilde{H}_{\lambda}$ is independent of its $i$th argument.




\begin{theorem}\label{th:expl}
Let $D$ be a divisor on $X$ of degree $l\le0$. Then
\begin{equation}\label{E:formula_I_plus_D_Bun}
\sum_{r,d} \sI^{+}_{D,\nil}(r,d)\z^r\w^d
=\sum_\la(-q^\oh)^{(2g-2-l)\ang{\la,\la}}
J_\la(\w)H_\la(\w)\z^{\n\la},
\end{equation}
where $\ang{\la,\la}=\sum_{i\ge1}\la_i'\la_i'$ and $\la'$ is a partition conjugate to \la.
\end{theorem}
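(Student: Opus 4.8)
The plan is to carry out, in the Hall algebra of $\Coh(X)$, the computation of \cite{schiffmann_indecomposable}, with one additional ingredient: a stratification of the nilpotent Higgs fields by Jordan type. Geometrically, $\sum_{r,d}\sI^+_{D,\nil}(r,d)\z^r\w^d$ is the normalised volume generating series of the stack of pairs $(E,\te)$ with $E\in\cA^+$ a vector bundle and $\te\in\Hom(E,E(D))$ nilpotent. When $l<0$ every such $\te$ is automatically nilpotent: it maps each step $F_k$ of the Harder--Narasimhan filtration of $E$ into $F_{k-1}(D)$ --- a nonzero map from a semistable sheaf of slope $\mu$ to one of strictly smaller slope cannot exist --- whereas for $l=0$ it merely preserves the (twisted) filtration, so there the subscript $\nil$ is a genuine restriction, treated by the same bookkeeping. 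Now a nilpotent $\te$ determines the subbundle filtration $0=\bar E_0\sb\cdots\sb\bar E_m=E$ obtained by saturating the subsheaves $\ker(\te^i)$, with $\te(\bar E_i)\sb\bar E_{i-1}(D)$; refining this to a complete flag of length $n$ and recording how its steps group into the Jordan strings of $\te$ stratifies the stack into locally closed pieces indexed by a partition $\la$ with $l(\la)=n$ and $\n\la=r$ (the generic Jordan type of $\te$). This yields $\sI^+_{D,\nil}(r,d)=\sum_{\n\la=r}\sI^+_{D,\la}(r,d)$, and it suffices to show, for each $\la$, that $\sum_d\sI^+_{D,\la}(r,d)\w^d=(-q^\oh)^{(2g-2-l)\ang{\la,\la}}J_\la(\w)H_\la(\w)$; this is the origin of the outer sum over $\la$ and of the monomial $\z^{\n\la}$.

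Fix $\la$, and compute $\sI^+_{D,\la}$ in the completed Hall algebra. The filtration above exhibits a type-$\la$ object as an iterated extension whose graded pieces are line bundles, the degree shifts inside each Jordan string being prescribed by the successive twists by $D$. Its contribution factors into three parts. First, the choice of the base line bundles together with the extensions internal to the Jordan strings contributes, after summing geometric series over degrees, a product of rank-one zeta factors, one per box $s$ of the Young diagram of $\la$, equal to $Z^*_X(q^{-1-l(s)}\w^{a(s)})$ with $a(s)$, $l(s)$ the arm and leg of $s$; these record the position of $s$ along its Jordan string and relative to the neighbouring strings, and their product over $s\in\la$ is $J_\la(\w)$. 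Second, the remaining freedom --- the choice of the complete flag inside the bundle part of $E$, subject to $E\in\cA^+$ --- contributes an Eisenstein-type factor to be identified with $H_\la(\w)$. Third, a scalar weight: by Riemann--Roch along the filtration, counting the type-$\la$ nilpotent Higgs fields on $E$ produces a factor whose only $\deg D$-dependent part is $(-q^\oh)^{l(r^2-\ang{\la,\la})}$, where $r^2-\ang{\la,\la}=\dim\mathcal O_\la$ is the dimension of the nilpotent orbit of type $\la$ in $\mathfrak{gl}_r$ (the exponent being $\tfrac12 l\dim\mathcal O_\la$ in $q$ rather than $l\dim\mathcal O_\la$, reflecting the Lagrangian nature of the global nilpotent cone); combined with the normalisation $(-q^\oh)^{-lr^2}$ and the $\deg D$-independent factor $q^{(g-1)\ang{\la,\la}}=(-q^\oh)^{(2g-2)\ang{\la,\la}}$ this assembles to $(-q^\oh)^{(2g-2-l)\ang{\la,\la}}$, so $\deg D$ survives in the final formula only through this weight.

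It remains to identify the Eisenstein-type factor with $H_\la(\w)$, and this is the main obstacle. One runs the Harder--Narasimhan/wall-crossing recursion of \cite{schiffmann_indecomposable}: the volume of the stack of complete flags of vector bundles with all HN slopes $\ge0$ refining a Jordan structure of type $\la$ is computed by rewriting $\cA^+$ through its semistable strata and summing geometric series. Concretely, $L(\w_n,\ldots,\w_1)$ is a renormalisation of the volume generating series of such flagged bundles: the factors $\frac1{1-\w_1}$ and $\frac1{1-q\w_{i+1}/\w_i}$ encode the positivity truncation along the flag, while the $\mathfrak S_n$-symmetrisation together with the prefactor $1/\prod_{i<j}\wtl Z_X(\w_i/\w_j)$ is the standard device (Laumon--Rapoport, Atiyah--Bott) turning a sum over flags into a sum over bundles; the iterated residue $\Res_\la$ along the strings $\w_{k+1}/\w_k=q\inv$ grouped by the parts of $\la$, followed by the substitution $\w_i\mto\w^iq^{-r_{<i}}$, then collapses each Jordan string to a single base variable while inserting the correct degree shift. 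The hard part is to verify that this residue calculus reproduces the count --- that the truncated Eisenstein series has exactly the pole structure prescribed by $\Res_\la$, and that all the normalisations ($\wtl Z_X$ versus $Z_X$, the powers of $q$ in $Z^*_X(q^{-1-l(s)}\w^{a(s)})$ and in the residue prescription, and the $-q^\oh$ prefactor) fit together. I would organise this by first settling the untwisted case $l=0$, where --- via Proposition~\ref{D=K-D} and the relation $\Om^+_K(r,d)=q\,\Om^+_{0,\nil}(r,d)$ --- the outcome can be cross-checked against \cite{schiffmann_indecomposable}, and then propagating the single remaining parameter $l=\deg D$ through the Euler-form weights of the previous step.
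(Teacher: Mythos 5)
Your overall strategy is the one the paper follows: stratify nilpotent pairs by Jordan type, reduce the count over each stratum to a filtration count times an affine-space fibre, isolate the $\deg D$-dependence in an Euler-form weight, and identify the remaining $l=0$ flag count with $J_\la(\w)H_\la(\w)$ via the Eisenstein-series residue calculus. Your prediction for the $l$-dependent weight is exactly right: the paper's \eqref{E:rhoalpha} gives $\rho_l(\lb\al)=\rho_0(\lb\al)+\frac l2 r^2-\frac l2\ang{\la,\la}$, which is precisely your $q^{\frac l2(r^2-\ang{\la,\la})}=q^{\frac l2\dim\cO_\la}$; and the step you call the main obstacle --- matching the truncated Eisenstein series with $H_\la(\w)$ --- is not re-proved in the paper either but quoted from \cite[Sec.5.6]{schiffmann_indecomposable}, so deferring it is legitimate.

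The genuine gap is in how you set up the stratification. You filter $E$ by the saturations of $\ker\te^i$; this does produce a flag of subbundles, but its associated graded only remembers the \emph{generic} Jordan type of $\te$, and the maps induced on the graded pieces are only generically injective, with torsion cokernels, so $(E,\te)$ is not reconstructed from a flag of bundles by an affine space of choices --- your claim that ``the contribution factors into three parts'' is unsubstantiated as stated. The paper instead filters by the images $F_k=\im\te^k(-kD)$ inside the category of coherent sheaves: the quotients $F''_k$ may acquire torsion even when $E$ is a vector bundle, the forgetful map $\gnil_D(\lb\al)\to\gfil(\lb\al)$ is shown (via the Gothen--King exact sequence) to have affine fibres of volume $\prod_k q^{-\hi(F''_k,F'_{k+1})}$, and the resulting formula computes $\sI^{+,\coh}_{D,\nil}$, the count over all positive \emph{coherent} sheaves. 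The restriction to vector bundles is only recovered at the very end, by splitting $F=V\oplus T$ into bundle and torsion parts and dividing by the torsion generating series $\Exp\rbr{\frac{[X]}{q-1}\cdot\frac{\w}{1-\w}}$. Your proposal has no counterpart of this torsion bookkeeping, and without it the strata volumes --- and in particular the $J_\la$ factors, which encode exactly these torsion contributions --- do not come out correctly.
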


The above theorem is proved in section \ref{sec:nilp}.

\section{Twisted Higgs bundles}

Let $X$ be a curve of genus $g$ over a field \bk and let $D$ be a divisor over $X$ of degree $l$.
A $D$-twisted Higgs bundle is a pair $\ub E=(E,\te)$, where $E$ is vector bundle over $X$ and $\te\in\Hom(E,E(D))$.
Similarly, we define a $D$-twisted Higgs sheaf to be a pair $\ub E=(E,\te)$, where $E$ is a coherent sheaf over $X$ and $\te\in\Hom(E,E(D))$.
Let $\cA_D$ denote the category of $D$-twisted Higgs sheaves.

\subsection{Homological properties}
The category $\cA_D$ is an abelian category of homological dimension~$2$.
More precisely, we have

\begin{theorem}[Gothen-King {\cite{gothen_homological}}]
Given $\ub E=(E,\te)$, $\ub F=(F,\te')$ in $\cA_D$,
there is a long exact sequence
\begin{multline*}
0\to\Hom(\ub E,\ub F)\to\Hom(E,F)\to\Hom(E,F(D))\\
\to\Ext^1(\ub E,\ub F)\to\Ext^1(E,F)\to\Ext^1(E,F(D))
\to\Ext^2(\ub E,\ub F)\to0.
\end{multline*}
\end{theorem}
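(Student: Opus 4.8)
The plan is to realise $\cA_D$ as a category of sheaves on a smooth surface, after which the statement follows from a single explicit resolution together with affine base change. Set $A=\mathrm{Sym}_{\cO_X}(\cO_X(-D))$ and let $\pi\colon S=\operatorname{Spec}_X A=\operatorname{Tot}(\cO_X(D))\to X$ be the total space of the line bundle $\cO_X(D)$; then $S$ is a smooth variety of dimension $2$ and $\pi$ is affine and flat. Since giving a sheaf $E$ the structure of an $A$-module is the same as giving a map $\cO_X(-D)\ts E\to E$, i.e.\ an element $\te\in\Hom(E,E(D))$, the assignment $\cE\mapsto\pi_*\cE$ identifies $\cA_D$ with the full subcategory of coherent $\cO_S$-modules with support finite over $X$; likewise, $D$-twisted Higgs sheaves without coherence are the same as quasi-coherent $\cO_S$-modules. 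I will compute all derived $\Hom$'s in the Grothendieck category $\mathrm{QCoh}(S)$, which has enough injectives and, $S$ being Noetherian, computes the usual $\Ext$-groups of the coherent objects $\ub E=(E,\te)$ and $\ub F=(F,\te')$; note that $\pi_*\cF=F$, and that $\cA_D$ has homological dimension $\le2$ because $S$ is a smooth surface.

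The key step is the tautological resolution. Let $s\colon\cO_S\to\pi^*\cO_X(D)$ be the tautological section of $\pi^*\cO_X(D)$ (the one classified by $\operatorname{id}_S$). For $\ub E$ with associated sheaf $\cE$ I would write down the short exact sequence of $\cO_S$-modules
\[
0\longrightarrow\pi^*\rbr{E(-D)}\xrightarrow{\ s-\pi^*\te\ }\pi^*E\longrightarrow\cE\longrightarrow0,
\]
where $s$ is read as the $\cO_S$-linear map $\pi^*(E(-D))\to\pi^*E$ induced by the tautological section, $\pi^*\te$ is the pullback of $\te\colon E(-D)\to E$, and $\pi^*E\to\cE$ is the counit of the adjunction between restriction and induction of Higgs sheaves. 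Applying $\pi_*$ and working through the grading of $A$, this becomes, after trivialising $\cO_X(D)$ locally on $X$, the familiar resolution $0\to\bk[t]\ts_{\bk}E\xrightarrow{\,t-\te\,}\bk[t]\ts_{\bk}E\to E\to0$ of the $\bk[t]$-module $E$ on which $t$ acts as $\te$, so its exactness can be checked locally on $X$.

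The last step evaluates $R\Hom_S(-,\cF)$ on this resolution. Because $\pi$ is affine and flat, $\pi^*$ is exact, $R\pi_*=\pi_*$, and $\pi_*\cF=F$, so the adjunction $\pi^*\dashv\pi_*$ gives, for any quasi-coherent sheaf $G$ on $X$,
\[
R\Hom_S(\pi^*G,\cF)\iso R\Hom_X(G,R\pi_*\cF)=R\Hom_X(G,F).
\]
(Equivalently: induction $A\ts_{\cO_X}(-)$ is exact since $A$ is $\cO_X$-flat, hence its right adjoint, restriction, preserves injectives, and $R\Hom$ out of an induced Higgs sheaf against $\ub F$ collapses to $R\Hom_X$ against $F$.) Feeding the tautological resolution into $R\Hom_S(-,\cF)$ and using $R\Hom_X(E(-D),F)\iso R\Hom_X(E,F(D))$, one gets a distinguished triangle and hence a long exact sequence
\[
\cdots\to\Ext^i_{\cA_D}(\ub E,\ub F)\to\Ext^i_X(E,F)\to\Ext^i_X\bigl(E,F(D)\bigr)\to\Ext^{i+1}_{\cA_D}(\ub E,\ub F)\to\cdots.
\]
Since $X$ is a curve, $\Ext^{\ge2}_X$ vanishes, so starting at $i=0$ this truncates to exactly the six-term sequence in the statement (and also yields $\Ext^{\ge3}_{\cA_D}=0$, i.e.\ homological dimension $\le2$). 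The one genuinely delicate point is keeping the twists straight in the tautological resolution and verifying its exactness; once that is in place---or once one simply invokes the analogous computation of Gothen and King for one-loop twisted quiver sheaves---everything else is formal.
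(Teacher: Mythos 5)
The paper offers no proof of this statement---it is quoted from Gothen--King---and your argument is, in essence, the geometric translation of exactly the proof given there: Gothen--King resolve any module over the sheaf of $\cO_X$-algebras $A=\mathrm{Sym}_{\cO_X}(\cO_X(-D))$ by the two-term complex of induced modules $0\to A\ts_{\cO_X}E(-D)\to A\ts_{\cO_X}E\to E\to 0$ and use $\Ext^i_A(A\ts_{\cO_X}G,\ub F)\iso\Ext^i_{\cO_X}(G,F)$ (induction is exact because $A$ is $\cO_X$-flat, so restriction preserves injectives), which under $S=\operatorname{Spec}_XA$ becomes precisely your tautological resolution and the adjunction $\pi^*\dashv\pi_*$. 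Your proof is correct; the twists, the direction of the six-term sequence, and the truncation via $\Ext^{\ge2}_X=0$ all check out. The one point worth making explicit is the identification of the $\Ext$-groups computed in $\mathrm{QCoh}(S)$ with those of the category $\cA_D$ of pairs with $E$ coherent (equivalently, sheaves on $S$ finite over $X$): for $\Hom$ and $\Ext^1$ this is immediate since $\cA_D$ is a full subcategory closed under extensions, and for $\Ext^2$ one can simply take the ambient groups as the definition (as Gothen--King in effect do), since nothing in the paper requires more.
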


\begin{corollary}
For any $\ub E,\ub F\in\cA_D$, we have
\begin{enumerate}
	\item $\hi(\ub E,\ub F)=\hi(E,F)-\hi(E,F(D))=-l\rk E\rk F$.
	\item $\hi(\ub E,\ub F)=\hi(\ub F,\ub E)$.
\end{enumerate}
\end{corollary}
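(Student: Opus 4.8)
The plan is to extract both identities from the Gothen--King long exact sequence combined with Riemann--Roch on $X$. First I would note that, since $X$ is a smooth projective curve, $\Coh(X)$ has homological dimension $1$ and all $\Hom$ and $\Ext^1$ groups between coherent sheaves are finite-dimensional, so $\hi(E,F)=\dim\Hom(E,F)-\dim\Ext^1(E,F)$ is well defined; likewise the seven-term sequence of the Gothen--King theorem shows that $\Ext^{\ge3}(\ub E,\ub F)=0$, that the remaining $\Ext$-groups are finite-dimensional, and hence that $\hi(\ub E,\ub F)=\sum_{i\ge0}(-1)^i\dim\Ext^i(\ub E,\ub F)$ is well defined.

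Next I would take the alternating sum of dimensions along the Gothen--King sequence. Since the sequence is exact its alternating sum of dimensions vanishes, and regrouping the terms $\Ext^i(\ub E,\ub F)$, $\Ext^i(E,F)$, $\Ext^i(E,F(D))$ gives
\[\hi(\ub E,\ub F)=\hi(E,F)-\hi(E,F(D)),\]
which is the first equality of part (1). For the second equality I would invoke Riemann--Roch: the Euler form on $\Coh(X)$ factors through the Chern character, with
\[\hi(E,F)=\rk E\cdot\deg F-\rk F\cdot\deg E+(1-g)\rk E\rk F.\]
Because tensoring by $\cO(D)$ is exact and $\ch F(D)=(\rk F,\deg F+l\rk F)$, this yields $\hi(E,F(D))=\hi(E,F)+l\rk E\rk F$, whence
\[\hi(\ub E,\ub F)=\hi(E,F)-\hi(E,F(D))=-l\rk E\rk F.\]

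Part (2) is then immediate, as $-l\rk E\rk F$ is symmetric in $E$ and $F$. I do not expect any genuine obstacle here; the only points needing a word of care are the finiteness of all the cohomology groups involved (guaranteed by properness of $X$ and the finite homological dimension of $\cA_D$ coming from the Gothen--King theorem) and the standard fact, a consequence of Serre duality and Riemann--Roch, that $\hi(E,F)$ on a curve depends only on $(\rk E,\deg E)$ and $(\rk F,\deg F)$.
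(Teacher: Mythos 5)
Your proof is correct and follows exactly the intended route: the paper states this as an immediate corollary of the Gothen--King exact sequence, obtained by taking the alternating sum of dimensions and then applying Riemann--Roch to evaluate $\hi(E,F)-\hi(E,F(D))=-l\rk E\rk F$, with (2) following from the symmetry of that expression. No issues.
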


Applying Serre duality for coherent sheaves, we obtain the following form of Serre 
duality for Higgs bundles.

\begin{corollary}
For any $\ub E,\ub F\in\cA_D$, we have
\[\Ext^i(\ub E,\ub F)\iso\Ext^{2-i}(\ub F,\ub E(K-D))^*.\]
\end{corollary}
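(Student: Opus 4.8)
The plan is to dualise the Gothen--King long exact sequence using ordinary Serre duality on the curve $X$, and to recognise the outcome as the Gothen--King sequence attached to the pair $(\ub F,\ub E(K-D))$. I will phrase the argument derived-categorically, as this keeps the bookkeeping short; the last paragraph indicates the elementary variant.

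First I would record that the seven-term Gothen--King sequence is exactly the long exact cohomology sequence of the two-term complex of vector spaces
\[
\psi_{\ub E,\ub F}\colon R\Hom(E,F)\longrightarrow R\Hom(E,F(D)),\qquad f\longmapsto \te'\circ f-f(D)\circ\te ,
\]
meaning that $R\Hom_{\cA_D}(\ub E,\ub F)$ is quasi-isomorphic to the mapping cone of $\psi_{\ub E,\ub F}$, shifted by $[-1]$ (splicing the two rows $\Hom(E,F)\to\Hom(E,F(D))$ and $\Ext^1(E,F)\to\Ext^1(E,F(D))$ reproduces the stated sequence). I would then apply Serre duality on $X$ in the form $R\Hom(A,B)^*\iso R\Hom(B,A(K))[1]$ to each term, getting $R\Hom(E,F)^*\iso R\Hom(F,E(K))[1]$ and $R\Hom(E,F(D))^*\iso R\Hom(F(D),E(K))[1]=R\Hom(F,E(K-D))[1]$.

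The key step is identifying the dual map $\psi_{\ub E,\ub F}^*$ under these isomorphisms. Since $\psi_{\ub E,\ub F}$ is the difference of a postcomposition (with $\te'$) and a precomposition (with $\te$, after twisting by $D$), and Serre duality is functorial — turning postcomposition into precomposition and conversely — a direct chase of the naturality squares shows that $\psi_{\ub E,\ub F}^*$ becomes, up to the evident twisting isomorphisms and an overall sign, the commutator map
\[
\psi_{\ub F,\ub E(K-D)}\colon R\Hom(F,E(K-D))\longrightarrow R\Hom(F,E(K)),\qquad g\longmapsto \te(K-D)\circ g-g(D)\circ\te' ,
\]
where $\te(K-D)\colon E(K-D)\to E(K)$ is $\te$ twisted by $K-D$. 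The sign is harmless, since $(E,-\te)\iso(E,\te)$ in $\cA_D$ (alternatively one tracks it and it cancels the sign introduced by the cone). Using that the dual of a mapping cone is the shifted cone of the dual, $\mathrm{Cone}(\phi)^*\iso\mathrm{Cone}(\phi^*)[-1]$, together with the identifications above, I would then obtain $R\Hom_{\cA_D}(\ub E,\ub F)^*\iso R\Hom_{\cA_D}(\ub F,\ub E(K-D))[2]$; passing to cohomology in degree $-i$ gives $\Ext^i(\ub E,\ub F)\iso\Ext^{2-i}(\ub F,\ub E(K-D))^*$. (As a sanity check, taking alternating sums of dimensions recovers $\hi(\ub E,\ub F)=\hi(\ub F,\ub E(K-D))=-l\rk E\rk F$, consistent with the previous corollary.)

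The main obstacle is precisely the compatibility statement in the third paragraph: one must chase the functoriality squares for Serre duality on $X$ while keeping the three twists $D$, $K$ and $K-D$ straight, so that the two commutator maps genuinely match (rather than differing by some spurious twist), and one must fix degree conventions carefully so that the shift produces $2-i$ rather than, say, $-i$ or $1-i$. Everything else is formal. If one prefers to avoid derived categories, the same proof runs by writing out the seven-term sequence of the theorem for both pairs, dualising the one for $(\ub E,\ub F)$ term by term via Serre duality, and applying the five lemma — but the compatibility of the four vertical maps induced by $\te$ and $\te'$ is still the crux and is checked in the same way.
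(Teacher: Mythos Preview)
Your argument is correct and is precisely the elaboration of what the paper has in mind: the paper gives no proof beyond the sentence ``Applying Serre duality for coherent sheaves, we obtain the following form of Serre duality for Higgs bundles'', and your derived-category dualisation of the Gothen--King two-term complex is exactly how one makes that sentence rigorous.

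One small correction: the claim that $(E,-\te)\iso(E,\te)$ in $\cA_D$ is false in general (e.g.\ $E=\cO_X$, $D=0$, $\te=\mathrm{id}$ in characteristic $\ne2$). Fortunately you do not need it: for any morphism $\phi$ of complexes one has $\mathrm{Cone}(\phi)\iso\mathrm{Cone}(-\phi)$ via $(-1)$ on one summand, so the overall sign on the commutator map is irrelevant to the cone --- or, as you say parenthetically, one simply tracks the sign and watches it cancel against the cone/dual shift. Either way the conclusion stands; just drop the incorrect justification.
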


We define semistable objects in $\cA_D$ using the slope function
\[\mu(\ub E)=\frac{\deg E}{\rk E}.\]

\begin{corollary}\label{cr:sst zero}
Assume that $\deg D\ge 2g-2$ and $\ub E,\ub F\in\cA_D$ are semistable objects such that $\mu(\ub E)<\mu(\ub F)$.
Then $\Ext^2(\ub E,\ub F)=0$.
\end{corollary}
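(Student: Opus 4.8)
The plan is to convert the vanishing of $\Ext^2$ into the absence of nonzero morphisms between semistable objects of incompatible slopes, using the Serre duality corollary stated just above.

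First I would apply that corollary with $i=2$, which gives a natural isomorphism
\[\Ext^2(\ub E,\ub F)\iso\Hom\rbr{\ub F,\ub E(K-D)}^*.\]
So it suffices to prove $\Hom(\ub F,\ub E(K-D))=0$. For this I would record two elementary observations. First, tensoring a $D$-twisted Higgs sheaf $(E,\te)$ by a line bundle $L$ produces the object $(E\ts L,\te\ts\mathrm{id}_L)$ of $\cA_D$, and since this operation shifts the slope of every sub- and quotient object by the same amount $\deg L$, it preserves semistability and changes the slope by exactly $\deg L$. Taking $L=\cO(K-D)$, the object $\ub E(K-D)$ is therefore semistable of slope $\mu(\ub E)+\deg(K-D)=\mu(\ub E)+(2g-2-l)$. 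Since by hypothesis $l=\deg D\ge 2g-2$, this gives $\mu\rbr{\ub E(K-D)}\le\mu(\ub E)<\mu(\ub F)$.

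Second, I would use the standard fact that in an abelian category with the rank/degree slope function there is no nonzero morphism from a semistable object of slope $\mu_1$ to a semistable object of slope $\mu_2$ when $\mu_1>\mu_2$: the image of such a morphism is at once a quotient of the source (hence of slope $\ge\mu_1$) and a subobject of the target (hence of slope $\le\mu_2$), which is absurd. Applying this with source $\ub F$ and target $\ub E(K-D)$ yields $\Hom(\ub F,\ub E(K-D))=0$, and combined with the displayed isomorphism this gives $\Ext^2(\ub E,\ub F)=0$.

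The argument is essentially a one-line deduction from Serre duality, so there is no serious obstacle; the only point needing a little care — and hence the mild "hard part" — is the morphism-vanishing fact in the presence of rank-zero (torsion) subobjects. One either notes that the relevant $\ub E,\ub F$ are Higgs vector bundles, so that images of morphisms are torsion-free of positive rank and the slope inequalities apply verbatim, or one checks that the slope function on $\cA_D$ is extended to torsion objects by the usual convention $\mu=+\infty$, which still forces the same contradiction.
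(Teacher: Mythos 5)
Your argument is correct and is essentially the paper's own proof: both use the Serre duality corollary to reduce $\Ext^2(\ub E,\ub F)=0$ to $\Hom(\ub F,\ub E(K-D))=0$, note that twisting by $K-D$ shifts the slope by $\deg(K-D)=2g-2-l\le0$ while preserving semistability, and conclude by the standard no-morphism-from-higher-to-lower-slope fact. Your extra remarks on torsion subobjects are a reasonable (if unneeded in the paper's terse version) point of care.
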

\begin{proof}
By our assumption $\deg(K-D)\le0$.
Therefore
\[\mu(\ub E(K-D))\le\mu(\ub E)<\mu(\ub F).\]
By the semistability of $\ub E(K-D)$ and $\ub F$ we conclude that
\[\Hom(\ub F,\ub E(K-D))=0.\]
This implies that $\Ext^2(\ub E,\ub F)=0$.
\end{proof}

\subsection{Hall algebra and quantum torus}
Let \cA be an abelian category over a finite field and $\cH$ be its Hall algebra.
Let $\Ga$ be a lattice equipped with a skew-symmetric form \ang{-,-}
and with a group homomorphism $\cl:K_0(\cA)\to\Ga$ such that
\[\hi(E,F)-\hi(F,E)=\ang{\cl E,\cl F},\qquad E,F\in\cA.\]
The algebra $\bT=\bQ(q^\oh)[\Ga]$ equipped with a product
\[e^\al\circ e^\be=(-q^\oh)^{\ang{\al,\be}}e^{\al+\be},\qquad \al,\be\in\Ga\]
is called the quantum affine torus. 
Both $\cH$ and $\bT$ are graded by the lattice $K_0(\cA)$. We will occasionally consider their completions
$$\prod_{\alpha \in K_0(\cA)} \cH[\alpha], \qquad \prod_{\alpha \in K_0(\cA)} \mathbb{T}[\alpha]$$
which we still denote by $\cH$ and $\bT$ respectively for simplicity.
We will denote completions of $\cH$ and $\bT$ by the same letters.
One defines the integration map
\eql{I:\cH\to\bT,\qquad [E]\mto (-q^\oh)^{\hi(E,E)}\frac{e^{\cl E}}{\n{\Aut E}}}{int map}
It is a ring homomorphism if \cA has homological dimension one \cite{reineke_counting}.
Generally it satisfies
\[I([E]\circ[F])=I(E)\circ I(F)\]
if $\Ext^k(F,E)=0$ for $k\ge2$. This explains the significance of Cor.\ \ref{cr:sst zero}.

\subsection{Invariants}\label{sec:invar}
Let us assume that the curve $X$ is defined over a finite field \bk.
We will denote the Hall algebra of $\cA_D$ by $\cH_D$. Define
\[\cl:K_0(\cA_D)\to\bZ^2,\qquad (E,\te)\mto(\rk E,\deg E).\]
As $\hi(\ub E,\ub F)=\hi(\ub F,\ub E)$ for any $\ub E,\ub F\in\cA_D$, we equip
$\bZ^2$ with a trivial skew-symmetric form. Therefore the quantum torus
$\bT=\bQ(q^\oh)\sbr{\bZ^2}$ is commutative.
As in the previous section, there is an integration map
\[I:\cH_D\to\bT,\qquad
[\ub E]\mto (-q^\oh)^{\hi(\ub E,\ub E)}\frac{e^{\cl \ub E}}{\n{\Aut \ub E}}.\]
We will use variables
\[\z=e^{(1,0)},\qquad \w=e^{(0,1)}.\]

Let $\bfM_D(r,d)$ be the set of isomorphism classes
of semistable $\ub E\in\cA_D$ with $\cl\ub E=(r,d)$.
Note that if $(E,\te)$ is semistable and has positive rank then $E$ is a vector bundle.
Define
\[\one^\sst_{r,d}=\sum_{\ub E\in\bfM_D(r,d)}[\ub E]\in\cH_D,\qquad
\sH_D(r,d)\z^r\w^d=I\rbr{\one^\sst_{r,d}}.\]
Then
\[\sH_D(r,d)=\sum_{\ub E\in\bfM_D(r,d)}(-q^\oh)^{\hi(\ub E,\ub E)}\frac{1}{\n{\Aut\ub E}}
=(-q^\oh)^{-lr^2}\sum_{\ub E\in\bfM_D(r,d)}\frac{1}{\n{\Aut\ub E}}
.\]
Note that $\sH_D(r,d)=\sH_D(r,d+r)$.

We define the DT invariants $\Om_D(r,d)$ of the twisted Higgs bundles by the formula
\[\sum_{d/r=\ta}\sH_D(r,d)\z^r\w^d
=\Exp\rbr{\sum_{d/r=\ta}\frac{\Om_D(r,d)}{q-1}\z^r\w^d},\qquad\ta\in\bR.\]
Comparing equations for $\ta$ and $\ta+1$, we obtain that $\Om(r,d)=\Om(r,d+r)$.
Various tests justify the conjecture  that $\Om(r,d)$ are independent of $d$ (cf.\ \cite[Conj.1.9]{chuang_motivic}).
Note that if $r,d$ are coprime then
\[\sH_D(r,d)=\frac{\Om_D(r,d)}{q-1}\]
and here the independence of $d$, in the case of $D=K_X$, was conjectured by
Hausel and Thaddeus \cite[Conj.3.2]{hausel_mirrora}.

\section{Positive Higgs bundles}
Let $X$ be a curve as before and let $\cA=\Coh X$ be the category of
coherent sheaves over $X$.
We define the notion of semistability on \cA using the slope function
$\mu(F)=\deg F/\rk F$.
For any $F\in\cA$, there exists a unique filtration
\[0=F_0\sb F_1\sb\dots\sb F_k=F\]
such that $G_i=F_i/F_{i-1}$ are semistable and
\[\mu(G_1)>\dots>\mu(G_k).\]
This filtration is called the Harder-Narasimhan filtration of $F$.
We define the spectrum $\si(F)\sb\bR\cup\set\infty$ of $F$ to be the set of all $\mu(G_i)$.
Given $\ta\in\bR$, we define the subcategories
\[\cA^{>\ta}=\sets{F\in\cA}{\inf\si(F)>\ta},\qquad
\cA^{<\ta}=\sets{F\in\cA}{\sup\si(F)<\ta}.
\]	
There is a torsion pair $(\cA^{>\ta},\cA^{\le\ta})$ of $\cA$ with $\cA^{>\ta}$ being a torsion part. Therefore $\cA^{>\ta}$ is closed under taking extensions and quotients in $\cA$.
Similarly there is a torsion pair 
$(\cA^{\ge\ta},\cA^{>\ta})$ with $\cA^{\ge\ta}$ being a torsion part.
We define the subcategory $\cA_D^{>\ta}\sb\cA_D$ to be the category of objects $(E,\te)\in\cA_D$ such that $E\in\cA^{>\ta}$.
We can define the notion of semistability on the category $\cA_D^{>\ta}$ using the same slope function as for $\cA_D$.
Let $\cA^+=\cA^{\ge0}$ and $\cA^+_D=\cA^{\ge0}_D$.
The objects of $\cA^+$ are called positive sheaves.
In the next statements we assume that $l=\deg D\ge0$.

\begin{lemma}\label{lm:sst gap}
	Assume that $E\in\cA$ and $\si(E)$ has a gap of length $>l$. Then there are no semistable pairs $(E,\te)\in\cA_D$.
\end{lemma}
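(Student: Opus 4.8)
The plan is to show that an arbitrary Higgs field $\te\in\Hom(E,E(D))$ preserves a suitable step of the Harder--Narasimhan filtration of $E$ whose slope exceeds $\mu(E)$, so that this step destabilizes $(E,\te)$. First I would write the HN filtration as $0=F_0\sb F_1\sb\dots\sb F_k=E$ with semistable quotients $G_i=F_i/F_{i-1}$ of strictly decreasing slopes, so that $\si(E)=\set{\mu(G_1),\dots,\mu(G_k)}$ (with the usual convention $\mu=+\infty$ on torsion). The hypothesis that $\si(E)$ has a gap of length $>l$ means precisely that $\mu(G_i)-\mu(G_{i+1})>l$ for some $i$ with $1\le i\le k-1$; in particular $k\ge2$, so $F_i$ is a nonzero proper subsheaf of $E$.

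The key step is the claim that $\te(F_i)\sb F_i(D)$, equivalently that the composite $F_i\emb E\xrightarrow{\te}E(D)\to (E/F_i)(D)$ vanishes. For this I would invoke the standard vanishing $\Hom(A,B)=0$ whenever $\mu_{\min}(A)>\mu_{\max}(B)$. Here $G_1,\dots,G_i$ are the HN factors of $F_i$, so $\mu_{\min}(F_i)=\mu(G_i)$; and $G_{i+1},\dots,G_k$ are the HN factors of $E/F_i$, so $\mu_{\max}(E/F_i)=\mu(G_{i+1})$ and hence $\mu_{\max}\big((E/F_i)(D)\big)=\mu(G_{i+1})+l$, since twisting by $D$ shifts every HN slope by $l=\deg D$. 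The gap hypothesis is exactly $\mu_{\min}(F_i)=\mu(G_i)>\mu(G_{i+1})+l=\mu_{\max}\big((E/F_i)(D)\big)$, which gives the vanishing. This argument is uniform in the possible presence of torsion: if $E$ has a nonzero torsion subsheaf $T$ then, being semistable of slope $+\infty$, it must be $F_1$, and $(E/F_1)(D)$ is torsion-free, so the inequality simply reads $+\infty>\mu(G_2)+l$.

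Granting the claim, $\ub{F_i}=(F_i,\te|_{F_i})$ is a nonzero proper subobject of $\ub E=(E,\te)$ in $\cA_D$. It remains to check $\mu(\ub{F_i})=\mu(F_i)>\mu(E)=\mu(\ub E)$: the HN inequalities give $\mu(F_i)\ge\mu(G_i)>\mu(G_{i+1})\ge\mu(E/F_i)$, and in a short exact sequence $0\to A\to B\to C\to0$ the inequality $\mu(A)>\mu(C)$ forces $\mu(A)>\mu(B)$; applied to $0\to F_i\to E\to E/F_i\to0$ this yields $\mu(F_i)>\mu(E)$. Hence $\ub{F_i}$ destabilizes $\ub E$, so no $(E,\te)$ can be semistable. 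I expect the only delicate point to be the bookkeeping around torsion and the $+\infty$ convention; the substance of the proof is just the HN-slope vanishing lemma together with elementary slope arithmetic, so there is no real obstacle.
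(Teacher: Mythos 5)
Your proof is correct and takes essentially the same route as the paper: your $F_i$ is exactly the paper's subsheaf $E'\in\cA^{\ge b}$ cut out by the HN filtration above the gap, the $\te$-invariance is established by the same vanishing $\Hom(E',E''(D))=0$ coming from the slope inequality $b>a+l$, and the conclusion is the same destabilization of $(E,\te)$ by $(E',\te|_{E'})$. The only difference is that you spell out the slope comparison $\mu(F_i)>\mu(E)$ and the torsion/$+\infty$ bookkeeping, which the paper leaves implicit.
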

\begin{proof}
	Let $a<b$ be the boundary points of the gap in $\si(E)$ of length $>l$. Then there exists an exact sequence
	\[0\to E'\to E\to E''\to0,\]
	where $E'\in\cA^{\ge b}$ and $E''\in\cA^{\le a}$. This implies that
	\[E''(D)\in\cA^{\le a+l}\sb\cA^{<b}\]
	and therefore $\Hom(E',E''(D))=0$. If $(E,\te)\in\cA_D$, then $\te(E')\sb E'(D)$ and therefore $(E',\te)$ is a destabilizing subobject of $(E,\te)$.
\end{proof}

\begin{lemma}
	\label{lm:pos}
	Assume that $E$ is a vector bundle of rank $r$ and degree $>\binom r2l$ and there exists a semistable pair $(E,\te)\in\cA_D$. Then $E\in\cA^{>0}$.
\end{lemma}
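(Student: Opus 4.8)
The plan is to argue by contradiction using the Harder--Narasimhan filtration of $E$ together with the slope bound forced by the existence of a semistable Higgs structure. Suppose $E\notin\cA^{>0}$, so the minimal slope $\mu_{\min}(E)=\inf\si(E)$ satisfies $\mu_{\min}(E)\le 0$. Write the HN-filtration $0=E_0\subset E_1\subset\dots\subset E_k=E$ with semistable quotients $G_i=E_i/E_{i-1}$ of ranks $\rho_i$ and slopes $\mu_1>\dots>\mu_k$, where $\mu_k=\mu_{\min}(E)\le 0$. Since $\sum_i\rho_i\mu_i=\deg E>\binom r2 l$ and $\sum_i\rho_i=r$, the average slope $\mu(E)=\deg E/r>\binom r2 l/r=\tfrac{r-1}{2}l$. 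I will show this is incompatible with all the gaps $\mu_i-\mu_{i+1}$ being small; the point is that a semistable Higgs pair constrains consecutive HN-slopes.

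First I would establish the basic gap estimate: if $(E,\te)\in\cA_D$ is semistable then $\mu_i-\mu_{i+1}\le l$ for every $i$. This is exactly the contrapositive of Lemma \ref{lm:sst gap}: if some gap in $\si(E)$ had length $>l$, then $(E,\te)$ would admit a destabilizing subobject, contradicting semistability. (One must note that the boundary slopes of such a gap are achieved by successive HN pieces, so the gap between consecutive HN-slopes is what is bounded.) Consequently $\mu_i\le\mu_k+(k-i)l$ for all $i$, and since $\mu_k\le 0$ we get $\mu_i\le (k-i)l\le (r-i)l$ when we crudely bound $k\le r$ — but to get the sharp constant I would instead use $k-i\le \rho_{i+1}+\dots+\rho_k-1$ is false; rather I bound $\mu_i \le \mu_k + (k-i)l$ and then weight. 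Estimating the degree:
\[
\deg E=\sum_{i=1}^k\rho_i\mu_i\le \sum_{i=1}^k \rho_i\bigl(\mu_k+(k-i)l\bigr)=r\,\mu_k+l\sum_{i=1}^k\rho_i(k-i)\le l\sum_{i=1}^k\rho_i(k-i),
\]
using $\mu_k\le 0$. The remaining combinatorial step is to check that $\sum_{i=1}^k\rho_i(k-i)\le\binom r2$ whenever $\rho_i\ge 1$ and $\sum\rho_i=r$; indeed $\sum_i\rho_i(k-i)$ counts, with multiplicity, ordered pairs lying in distinct graded pieces with the first piece strictly below the second, which is at most $\binom r2$ (equality when $k=r$, all $\rho_i=1$). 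This yields $\deg E\le\binom r2 l$, contradicting the hypothesis $\deg E>\binom r2 l$. Hence $\mu_{\min}(E)>0$, i.e.\ $E\in\cA^{>0}$.

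The main obstacle is the sharpness of the combinatorial inequality $\sum_i\rho_i(k-i)\le\binom r2$ and, more delicately, making sure the gap bound $\mu_i-\mu_{i+1}\le l$ really follows from Lemma \ref{lm:sst gap} in the form stated — one has to verify that a gap of length exactly $>l$ between \emph{consecutive} HN-slopes produces a sub/quotient splitting $0\to E'\to E\to E''\to0$ with $E'\in\cA^{\ge b}$, $E''\in\cA^{\le a}$ and $b-a>l$, which is precisely the HN-truncation at that slope, so this is immediate. A secondary subtlety is the borderline case $\mu_k=0$ exactly: then the weighted sum must be strictly less than $\binom r2 l$, which forces some $\rho_i\ge 2$ or $k<r$, and one traces through that the inequality is still strict enough; but since we only need $\deg E\le\binom r2 l$ to contradict a strict hypothesis, no case analysis is actually required. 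So the argument is essentially a one-line HN estimate once the gap bound is in hand.
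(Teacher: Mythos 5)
Your argument is correct and is essentially the paper's proof: both rest on the gap bound $\mu_i-\mu_{i+1}\le l$ from Lemma \ref{lm:sst gap} applied to consecutive HN slopes, followed by the combinatorial estimate $\sum_i\rho_i(k-i)\le\binom r2$ (the paper's $\sum_i(i-1)r_i\le\binom r2$ after reversing the indexing), the only cosmetic difference being that you phrase it as a contradiction from $\mu_{\min}(E)\le0$ while the paper derives $\mu_{\min}(E)>0$ directly from $\mu(E)>\tfrac{(r-1)l}{2}$.
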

\begin{proof}
	Let $a_1<\dots<a_k$ be the elements of $\si(E)$ and $(r_1,\dots,r_k)$ be the ranks of the corresponding factors. By the previous result $a_i-a_{i-1}\le l$ and therefore $a_i\le a_1+(i-1)l$ and
	\[\frac{(r-1)l}2<\mu(E)
	=\frac{\sum a_ir_i}{r}
	\le\frac{\sum_{i=1}^k(a_1+(i-1)l)r_i}{r}
	\le \frac kr a_1+\frac{(r-1)l}{2}
	\]
	which implies $a_1>0$ and $E\in\cA^{>0}$.
	We used here the fact that if $\sum_{i=1}^kr_i=r$, then
	\[\sum_{i=1}^k(i-1)r_i\le\binom r2.\]
	Indeed, by induction
	\[\sum_{i=1}^k(i-1)r_i
	\le\max_{1\le r_1\le r}\rbr{\binom{r-r_1}{2}+(r-r_1)}
	=\max_{1\le r_1\le r}\binom{r-r_1+1}2=\binom r2.\]
\end{proof}

\begin{corollary}\label{cr:relation}
	Let $E$ be a vector bundle of rank $r$ and degree $>\binom r2l$. Then a pair $(E,\te)\in\cA_D$ is semistable if and only if it is semistable as an object of $\cA^{>0}_D$ (or $\cA^{\ge0}_D$).
\end{corollary}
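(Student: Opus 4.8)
The plan is to prove the two implications separately, the only real input being Lemma~\ref{lm:pos}; the guiding observation is that a bundle $E$ with $\deg E>\binom r2 l$ carrying a Higgs field $\te$ is destabilised in $\cA_D$ if and only if it is destabilised in $\cA^{>0}_D$ (or $\cA^{\ge0}_D$), because the maximal destabilising $\te$-invariant subsheaf of such a pair is automatically positive.

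The forward implication I would treat formally. If $(E,\te)$ is semistable in $\cA_D$, then Lemma~\ref{lm:pos} applied to $(E,\te)$ itself gives $E\in\cA^{>0}$, so $(E,\te)$ is an object of $\cA^{>0}_D\sb\cA^{\ge0}_D$; moreover any subobject of $(E,\te)$ in $\cA^{>0}_D$ (resp.\ in $\cA^{\ge0}_D$) is a $\te$-invariant subsheaf $E'\sb E$ with $E'\in\cA^{>0}$ (resp.\ $\cA^{\ge0}$), hence in particular a subobject of $(E,\te)$ in $\cA_D$, so $\mu(E')\le\mu(E)$. This costs nothing.

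For the converse I would argue by contraposition. Suppose $(E,\te)$ is not semistable in $\cA_D$ and let $(E_1,\te|_{E_1})\sb(E,\te)$ be a maximal destabilising subobject, i.e.\ the first step of the Harder--Narasimhan filtration of $(E,\te)$ in $\cA_D$; then $E_1\sb E$ is a $\te$-invariant subsheaf, $(E_1,\te|_{E_1})$ is semistable in $\cA_D$, and $\mu(E_1)>\mu(E)$. Since $E$ is a vector bundle, so is $E_1$; write $r_1=\rk E_1\le r$. From $\deg E>\binom r2 l$ we get $\mu(E)>\tfrac{(r-1)l}2\ge\tfrac{(r_1-1)l}2$ (here $l\ge0$ is used), whence $\deg E_1=r_1\,\mu(E_1)>r_1\,\mu(E)>\binom{r_1}2 l$. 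Lemma~\ref{lm:pos}, applied to the semistable pair $(E_1,\te|_{E_1})$, now gives $E_1\in\cA^{>0}$. Hence $(E_1,\te|_{E_1})$ is a subobject of $(E,\te)$ in $\cA^{>0}_D$ (and in $\cA^{\ge0}_D$) of slope $>\mu(E)$, contradicting semistability there; so semistability in $\cA^{>0}_D$ or in $\cA^{\ge0}_D$ implies semistability in $\cA_D$.

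No serious obstacle should arise beyond invoking Lemma~\ref{lm:pos}; the rest is slope bookkeeping. One point to phrase carefully is what ``subobject'' means in $\cA^{>0}_D$ and $\cA^{\ge0}_D$: these are torsion subcategories (closed under quotients and extensions, but not under subobjects), so a subobject of $(E,\te)$ should be read as a pair $(E',\te|_{E'})$ with $E'\sb E$ a $\te$-invariant subsheaf lying in $\cA^{>0}$ (resp.\ $\cA^{\ge0}$); with this reading both arguments above go through. If one wishes to avoid quoting Harder--Narasimhan filtrations in $\cA_D$, one may instead choose, among all proper $\te$-invariant subsheaves $E'\sb E$ with $\mu(E')>\mu(E)$, one of maximal slope and then of maximal rank: maximality forces $(E',\te|_{E'})$ to be semistable, and the computation is the same.
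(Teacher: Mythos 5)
Your proof is correct and follows essentially the same route as the paper: the forward direction is immediate from Lemma~\ref{lm:pos}, and the converse takes the first Harder--Narasimhan step $(E_1,\te|_{E_1})$ of $(E,\te)$ in $\cA_D$, checks $\deg E_1>\binom{r_1}{2}l$ via the chain $\mu(E_1)>\mu(E)>\tfrac{(r-1)l}{2}\ge\tfrac{(r_1-1)l}{2}$, and applies Lemma~\ref{lm:pos} to conclude $E_1\in\cA^{>0}$, contradicting semistability in $\cA^{>0}_D$. Your added remark on the meaning of ``subobject'' in the torsion subcategories is a harmless clarification and does not change the argument.
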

\begin{proof}
	If $(E,\te)\in\cA_D$ is semistable then $E\in\cA^{>0}$ by Lemma \ref{lm:pos}. It is clear that $(E,\te)$ is semistable with respect to $\cA^{>0}_D$.
	
	Conversely, assume that $(E,\te)$ is semistable as an object of $\cA^{>0}_D$.
	If $(E,\te)$ is not semistable as an object of $\cA_D$, then it has a semistable destabilizing subobject $(F,\te')$ of rank $r'<r$. Then
	\[\mu(F)>\mu(E)>\frac{r-1}2l\ge\frac{r'-1}2l.\]
	By the previous result $F\in\cA^{>0}$ and therefore $(F,\te')$ is a destabilizing subobject of $(E,\te)$ in $\cA_D^{>0}$. This contradicts to the semistability of $(E,\te)$ in $\cA_D^{>0}$. The proof for $\cA^{\ge0}_D$ is analogous.
\end{proof}

\subsection{Comparison of invariants}

\begin{proof}[Proof of Theorem \ref{main1}]
Recall from section \ref{sec:invar}
that there is an integration map
\[I:\cH_D\to\bT,\] 
where $\cH_D$ is the (completed) Hall algebra of $\cA_D$ and \bT is the corresponding (completed) quantum torus.
Let $\one^{+,\bun}_{r,d}\in\cH_D$ be the sum over the objects $(E,\te)$ of
$\cA_D^+(r,d)$ such that $E$ is a vector bundle.
Let $\one^{+,\sst}_{r,d}\in\cH_D$ be the sum over the semistable objects $(E,\te)$ of
$\cA_D^+(r,d)$ (if $r>0$ then $E$ is a vector bundle). Then by uniqueness of Harder-Narasimhan filtrations, we have
\[\sum_{r,d}\one^{+,\bun}_{r,d}=\prod_{\ta\downarrow}
\rbr{\sum_{d/r=\ta}\one^{+,\sst}_{r,d}},\]
where the product is taken in the decreasing order of $\ta\in[0,+\infty)$.
If $\deg D\ge 2g-2$ then, by Cor.\ \ref{cr:sst zero}, the integration map preserves the product
on the right. If we define
\[\sH'_D(r,d)=(-q^\oh)^{-r^2l}[\cM^+_D(r,d)],\]
then, applying the integration map, we obtain
\[\sum_{r,d}\sI^+_D(r,d)\z^r\w^d=\prod_\ta
\rbr{\sum_{d/r=\ta}\sH'_D(r,d)\z^r\w^d}.\]
Note that the product on the right is not ordered anymore as the quantum torus is commutative
by \S\ref{sec:invar}.
Using the formula \eqref{eq:Om1} and comparing the slopes, we obtain
\[\sum_{d/r=\ta}\sH'_D(r,d)\z^r\w^d
=\Exp\rbr{\frac{\sum_{d/r=\ta}\Om^+_D(r,d)\z^r\w^d}{q-1}}\qquad\forall\ta>0.\]
This implies that $\sH'_D(r,d)=\sH^+_D(r,d)$.

Applying Cor.\ \ref{cr:relation}, we see that for $d>\binom r2l$, we have
\[\sH_D(r,d)=\sH'_D(r,d)=\sH^+_D(r,d).\]
To prove that $\Om_D(r,d)=\Om_D^+(r,d)$ for $d>\binom r2l$,
we compare the formulas \eqref{eq:Om0} and \eqref{eq:Om2}. We note
that if $d>\binom r2l$ and if $(r,d)=(kr',kd')$, for some $k\ge1$, then
\[kd'>\frac{kr'(kr'-1)}2l\ge\frac{k^2r'(r'-1)}2l\ge k\binom{r'}2l\]
and therefore $d'>\binom{r'}2l$. This implies that $\sH_D(r',d')=\sH^+_D(r',d')$
and we conclude that $\Om_D(r,d)=\Om_D^+(r,d)$.
\end{proof}

\begin{proposition}
\label{D=K-D}
We have
\[\Om_D^+(r,d)=\Om_{K-D}^+(r,d).\]
\end{proposition}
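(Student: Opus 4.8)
The plan is to derive this from an identity at the level of the generating series $\sum_{r,d}\sI^+_D(r,d)\z^r\w^d$, together with the defining formula \eqref{eq:Om1} for $\Om_D^+$. So the key reduction is: it suffices to prove
\[\sI^+_D(r,d)=\sI^+_{K-D}(r,d)\qquad\text{for all }r,d,\]
since then the two generating series agree, hence their plethystic logarithms agree termwise, hence $\Om_D^+(r,d)=\Om_{K-D}^+(r,d)$. (This is exactly Prop.~\ref{D=K-D} as cited in the introduction, so I expect the paper to state the sheaf-level identity as a separate proposition; here I will just carry it out.)

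To prove $\sI^+_D(r,d)=\sI^+_{K-D}(r,d)$, recall
\[\sI^+_D(r,d)=(-q^\oh)^{-lr^2}\sum_{E\in\cA^+_\bun(r,d)}\frac{[\Hom(E,E(D))]}{[\Aut E]},\]
and note that the sum is over the \emph{same} index set $\cA^+_\bun(r,d)$ of vector bundles $E$ with $\ch E=(r,d)$ — twisting by $D$ versus $K-D$ does not change which bundles $E$ appear, only the space $\Hom(E,E(D))$ whose class we weight by. First I would compute $[\Hom(E,E(D))]$. By Riemann--Roch on $\cEnd(E)=E^\vee\ts E$, which has rank $r^2$ and degree $0$,
\[\dim\Hom(E,E(D))-\dim\Ext^1(E,E(D))=\hi(E,E(D))=r^2(1-g)+lr^2=r^2(1-g+l),\]
and by Serre duality $\Ext^1(E,E(D))\iso\Hom(E,E(K-D))^*$. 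Hence
\[\dim\Hom(E,E(D))-\dim\Hom(E,E(K-D))=r^2(1-g+l),\]
so
\[[\Hom(E,E(D))]=q^{r^2(1-g+l)}\,[\Hom(E,E(K-D))].\]
Now substitute into the definition: writing $l'=\deg(K-D)=2g-2-l$ for the twist on the right-hand side, we get
\[\sI^+_D(r,d)=(-q^\oh)^{-lr^2}\sum_{E}\frac{q^{r^2(1-g+l)}[\Hom(E,E(K-D))]}{[\Aut E]}
=(-q^\oh)^{-lr^2}q^{r^2(1-g+l)}\cdot(-q^\oh)^{l'r^2}\sI^+_{K-D}(r,d).\]
The plan's final computation is then to check the scalar prefactor is $1$: the exponent of $q$ (using $(-q^\oh)^{kr^2}=(-1)^{kr^2}q^{kr^2/2}$) is
\[-\tfrac{l r^2}{2}+r^2(1-g+l)+\tfrac{(2g-2-l)r^2}{2}=r^2\Bigl(-\tfrac l2+1-g+l+g-1-\tfrac l2\Bigr)=0,\]
and the sign $(-1)^{-lr^2}(-1)^{l'r^2}=(-1)^{(l'-l)r^2}=(-1)^{(2g-2-2l)r^2}=1$ since $2g-2-2l$ is even. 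Hence $\sI^+_D(r,d)=\sI^+_{K-D}(r,d)$, and the proposition follows as above.

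I do not expect a genuine obstacle here — the argument is essentially bookkeeping with Riemann--Roch, Serre duality, and the normalization constant $(-q^\oh)^{-lr^2}$ baked into $\sI^+$ precisely to make this symmetry hold. The one point that deserves a sentence of care is that $\cA^+_\bun(r,d)$ really is the common index set for both sides (it depends only on $(r,d)$, not on the twisting divisor), and that all the Euler-characteristic/Serre-duality statements used are for honest vector bundles on the smooth projective curve $X$, where $\cEnd(E)$ is again locally free, so Riemann--Roch applies verbatim.
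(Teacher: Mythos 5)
Your proof is correct and follows essentially the same route as the paper's: Riemann--Roch plus Serre duality give $h^0(E,E(D))=r^2(1-g+l)+h^0(E,E(K-D))$ for each fixed bundle $E$, the prefactor $(-q^{1/2})^{-lr^2}$ absorbs the discrepancy so that $\sI^+_D(r,d)=\sI^+_{K-D}(r,d)$, and the equality of $\Om^+$ then follows from the defining relation \eqref{eq:Om1}. Your sign and exponent bookkeeping is a slightly more explicit version of the paper's one-line verification, but the argument is the same.
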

\begin{proof}
Let $h^i(E,F)=\dim\Ext^i(E,F)$. If $E$ is a vector bundle of rank $r$ then
\[h^0(E,E(D))=\hi(E,E(D))+h^0(E,E(K-D))
=r^2(1-g+l)+h^0(E,E(K-D)).\]
Therefore
\[-\oh r^2l+h^0(E,E(D))
=-\oh r^2\deg(K-D)+h^0(E,E(K-D)).\]
This implies that $\sI^+_D(r,d)=\sI^+_{K-D}(r,d)$.
By the definition of $\Om_D^+(r,d)$, we conclude that also
$\Om_D^+(r,d)=\Om_{K-D}^+(r,d)$.	
\end{proof}

\begin{corollary}\label{cr:D=K-D,nil}
If $\deg D>2g-2$ then
\[\Om_D^+(r,d)=\Om_{K-D,\nil}^+(r,d).\]
\end{corollary}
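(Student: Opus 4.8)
The plan is to combine Proposition~\ref{D=K-D} with the observation that twisting by a divisor of strictly negative degree forces nilpotency. First I would recall that, by Proposition~\ref{D=K-D}, we have $\Om_D^+(r,d)=\Om_{K-D}^+(r,d)$, so it suffices to show that $\Om_{K-D}^+(r,d)=\Om_{K-D,\nil}^+(r,d)$ when $\deg(K-D)<0$, i.e.\ when $\deg D>2g-2$. Since the generating series defining $\Om$ and $\Om_\nil$ (via \eqref{eq:Om1}) are obtained by applying $\Log$ to the series built from $\sI^+_{K-D}(r,d)$ and $\sI^+_{K-D,\nil}(r,d)$ respectively, the whole corollary reduces to the single identity $\sI^+_{K-D}(r,d)=\sI^+_{K-D,\nil}(r,d)$ for all $r,d$.

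Next I would establish that identity. Fix a vector bundle $E$ of rank $r$; the claim is that every $\te\in\Hom(E,E(K-D))$ is nilpotent once $\deg(K-D)<0$. The argument is the standard one for Higgs fields with negative twist: if $\te$ were not nilpotent, then some iterate $\te^{\otimes m}$ composed appropriately would give a nonzero map, and tracking slopes along the filtration by images of powers of $\te$ leads to a contradiction. Concretely, a non-nilpotent $\te$ produces, after passing to the "eigen-decomposition" of the induced endomorphism of the generic fibre (or by considering the characteristic polynomial of $\te$, a section of $\bigoplus_i (K-D)^{\otimes i}$ over $X$, which vanishes identically when $\deg(K-D)<0$ and $X$ is projective), forcing $\te$ to be nilpotent. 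Thus $\Hom(E,E(K-D))$ consists entirely of nilpotent elements, so the summands $[\Hom(E,E(K-D))]/[\Aut E]$ in $\sI^+_{K-D}(r,d)$ and in $\sI^+_{K-D,\nil}(r,d)$ coincide term by term, and the two invariants agree.

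Finally, I would feed this back: applying $\Log$ (as in \eqref{eq:Om1}) to the equal generating series $\sum_{r,d}\sI^+_{K-D}(r,d)\z^r\w^d=\sum_{r,d}\sI^+_{K-D,\nil}(r,d)\z^r\w^d$ yields $\Om^+_{K-D}(r,d)=\Om^+_{K-D,\nil}(r,d)$, and combining with Proposition~\ref{D=K-D} gives the stated equality $\Om_D^+(r,d)=\Om_{K-D,\nil}^+(r,d)$.

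The main obstacle is the nilpotency claim: one must argue cleanly that a twisted endomorphism valued in a line bundle (or divisor class) of negative degree has no nonzero "scalar part", i.e.\ that its characteristic coefficients, being global sections of negative-degree line bundles on the projective curve $X$, all vanish. This is where the hypothesis $\deg D>2g-2$ (hence $\deg(K-D)<0$, strictly) is essential — for $\deg(K-D)=0$ the coefficients can be nonzero and the statement fails — and it is the only place where genuine curve geometry enters; everything else is the formal bookkeeping of Hall algebras and plethystic logarithms already set up in the excerpt.
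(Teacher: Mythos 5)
Your proposal is correct and follows essentially the same route as the paper: apply Proposition~\ref{D=K-D}, observe that $\deg(K-D)<0$ forces every $\te\in\Hom(E,E(K-D))$ to be nilpotent (the paper states this without the characteristic-polynomial justification you supply), deduce $\sI^+_{K-D}(r,d)=\sI^+_{K-D,\nil}(r,d)$, and pass to the $\Om^+$ invariants via the defining plethystic relation.
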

\begin{proof}
As $\deg(K-D)<0$, all morphisms in $\Hom(E,E(K-D))$ are nilpotent.
This implies that $\sI^+_{K-D}(r,d)=\sI^+_{K-D,\nil}(r,d)$.
Therefore also $\Om_{K-D}^+(r,d)=\Om_{K-D,\nil}^+(r,d)$ and we conclude that
$\Om_D^+(r,d)=\Om_{K-D}^+(r,d)=\Om_{K-D,\nil}^+(r,d)$.	
\end{proof}

In the next section we will prove a similar relation between $\Om_K^+(r,d)$ and $\Om_{0,\nil}^+(r,d)$.
\section{Indecomposable positive bundles}
The next two results are analogues of Lemmas \ref{lm:sst gap} and \ref{lm:pos}.
See also \cite[Prop.2.5]{schiffmann_indecomposable}.
 
\begin{lemma}
Let $E$ be an indecomposable vector bundle over $X$. Then $\si(E)$ does not have gaps
of length $>2g-2$.
\end{lemma}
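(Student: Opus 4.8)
The plan is to argue by contradiction, exploiting the classical fact (Atiyah) that tensoring by line bundles and the Harder--Narasimhan formalism interact well with indecomposability. Suppose $\si(E)$ has a gap of length $>2g-2$, with boundary slopes $a<b$ and $b-a>2g-2$. Then the HN filtration of $E$ splits the HN factors into two groups and yields a short exact sequence
\[0\to E'\to E\to E''\to 0,\]
where $E'\in\cA^{\ge b}$ (the sum of the high-slope HN factors) and $E''\in\cA^{\le a}$ (the sum of the low-slope ones); both are nonzero since $a$ and $b$ both occur in $\si(E)$. The strategy is to show this sequence splits, contradicting indecomposability of $E$. It splits as soon as $\Ext^1(E'',E')=0$.

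By Serre duality, $\Ext^1(E'',E')\iso\Hom(E',E''(K))^*$. Now $E''(K)\in\cA^{\le a+(2g-2)}$, and since $b>a+(2g-2)$ we get $E''(K)\in\cA^{<b}$. Because $E'\in\cA^{\ge b}$ and $E''(K)\in\cA^{<b}$, every HN factor of $E'$ has slope strictly greater than every HN factor of $E''(K)$, so $\Hom(E',E''(K))=0$ by the usual semistability vanishing argument (a nonzero map from a semistable sheaf of slope $\mu_1$ to one of slope $\mu_2$ forces $\mu_1\le\mu_2$; filter source and target by HN and apply factor by factor). Hence $\Ext^1(E'',E')=0$, the sequence $0\to E'\to E\to E''\to 0$ splits, and $E\iso E'\oplus E''$ with both summands nonzero — contradicting that $E$ is indecomposable. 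Therefore $\si(E)$ has no gap of length $>2g-2$.

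The only mildly delicate point is the bookkeeping in reducing the $\Hom$- and $\Ext^1$-vanishing to the semistable case via HN filtrations, and making sure the slope inequalities are strict in the right places (so that $a+(2g-2)<b$ genuinely forces $E''(K)\in\cA^{<b}$, using $b-a>2g-2$ rather than $\ge$). This is entirely routine given the torsion-pair structure of $(\cA^{\ge b},\cA^{<b})$ recalled just above in the text, so no genuine obstacle is expected; the argument is essentially that of \cite[Prop.~2.5]{schiffmann_indecomposable}.
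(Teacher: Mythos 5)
Your argument is correct and is essentially identical to the paper's own proof: same short exact sequence from the HN filtration split at the gap, same Serre duality reduction $\Ext^1(E'',E')\iso\Hom(E',E''(K))^*$, and same slope vanishing $E''(K)\in\cA^{\le a+2g-2}\sb\cA^{<b}$ forcing the splitting. Nothing to add.
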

\begin{proof}
	Let $a<b$ be the boundary points of the gap in $\si(E)$ of length $>2g-2$. Then there exists an exact sequence
	\[0\to E'\to E\to E''\to0,\]
	where $E'\in\cA^{\ge b}$ and $E''\in\cA^{\le a}$. This implies that
	\[E''(K)\in\cA^{\le a+2g-2}\sb\cA^{<b}\]
	and therefore $\Ext^1(E'',E')\iso\Hom(E',E''(K))^*=0$. 
	We conclude that the above sequence
	splits and $E$ is not indecomposable.	
\end{proof}

\begin{corollary}
	Assume that $E$ is an indecomposable vector bundle of rank $r$
	 and degree\br \mbox{$>\binom r2(2g-2)$}.
	Then $E\in\cA^{>0}$.
\end{corollary}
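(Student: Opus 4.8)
The plan is to transcribe the proof of Lemma~\ref{lm:pos} almost word for word, with the divisor degree $l$ replaced by $2g-2$ and with the preceding lemma (an indecomposable bundle has no gap of length $>2g-2$ in its spectrum) used in place of Lemma~\ref{lm:sst gap}. Write $a_1<\dots<a_k$ for the elements of $\si(E)$ and $(r_1,\dots,r_k)$ for the ranks of the corresponding Harder--Narasimhan factors, so that $\sum_i r_i=r$ and $\inf\si(E)=a_1$; what has to be shown is precisely $a_1>0$.

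First I would apply the gap lemma to obtain $a_i-a_{i-1}\le 2g-2$ for all $i$, hence $a_i\le a_1+(i-1)(2g-2)$. Assuming $g\ge 2$ (so that $2g-2>0$) and feeding this into the slope,
\[\mu(E)=\frac{\sum_i a_ir_i}{r}\le a_1+(2g-2)\,\frac{\sum_i(i-1)r_i}{r}\le a_1+(2g-2)\,\frac{\binom r2}{r}=a_1+\frac{(r-1)(2g-2)}{2},\]
where the middle inequality is the combinatorial bound $\sum_i(i-1)r_i\le\binom r2$ already established inside the proof of Lemma~\ref{lm:pos}. On the other hand, the hypothesis $\deg E>\binom r2(2g-2)$ says exactly $\mu(E)>\binom r2(2g-2)/r=(r-1)(2g-2)/2$. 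Comparing the two inequalities cancels the common term $(r-1)(2g-2)/2$ and leaves $a_1>0$, i.e.\ $E\in\cA^{>0}$.

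I do not expect any genuine obstacle here: the only external ingredients are the preceding gap lemma (itself a consequence of Serre duality $\Ext^1(E'',E')\iso\Hom(E',E''(K))^*$ forcing the destabilizing short exact sequence to split) and the purely combinatorial inequality borrowed from Lemma~\ref{lm:pos}. The one point to keep an eye on is the low-genus range: when $g\le 1$ one has $2g-2\le 0$, so the gap lemma already forces $\si(E)$ to be a single point and $E$ to be semistable, after which $\deg E>\binom r2(2g-2)$ gives $\mu(E)>0$ directly (for $g=0$ one uses in addition that every indecomposable bundle on $\bP^1$ is a line bundle, so $r=1$ and $\binom r2=0$). Thus the statement holds in all genera, the displayed estimate being the substance of the $g\ge 2$ case.
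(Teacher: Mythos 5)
Your proof is correct and is exactly what the paper intends: its own proof of this corollary consists of the single sentence that it is analogous to Lemma~\ref{lm:pos}, and you carry out precisely that substitution ($l\mapsto 2g-2$, gap lemma for indecomposables in place of Lemma~\ref{lm:sst gap}). Your extra care with the sign of $2g-2$ in low genus is a sensible refinement that the paper leaves implicit (it only states the surrounding lemmas under $l\ge 0$), but it does not change the substance of the argument.
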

\begin{proof}
The proof is analogous to the proof of Lemma \ref{lm:pos}.
\end{proof}

Let $\sA^+(r,d)$ denote the number of positive (that is, contained in $\cA^{\ge0}$) absolutely indecomposable vector bundles of rank
$r$ and degree $d$.
The first formula of the next result was proved by the first author \cite{mozgovoy_motivicb}
in the case of quiver representations. The second formula was proved by the second author \cite{schiffmann_indecomposable}.
We give a unified approach based on \cite{mozgovoy_motivicb}.

\begin{theorem}
\label{th:Om_K}
We have
\begin{align}
\sum_{r,d}\sI^+_K(r,d)\z^r\w^d
&=\Exp\rbr{\frac{\sum_{r,d}\sA^+(r,d)\z^r\w^d}{1-q\inv}},\\
\sum_{r,d}\sI^+_{0,\nil}(r,d)\z^r\w^d
&=\Exp\rbr{\frac{\sum_{r,d}\sA^+(r,d)\z^r\w^d}{q-1}}.
\end{align}
\end{theorem}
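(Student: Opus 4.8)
The plan is to deduce both identities from a single statement in the Hall algebra of $\cA_K$ (resp.\ $\cA_0$ with nilpotent Higgs field), by analyzing the "constant term" along the zero section and relating it to the stack of coherent sheaves on $X$. First I would work over the category $\cA_0^+=\cA^{+}_{0}$ of pairs $(E,\te)$ with $\te\in\End(E)$ and $E\in\cA^+$, and over $\cA_K^+$. In each case the forgetful functor $(E,\te)\mapsto E$ identifies the groupoid of such pairs over a fixed bundle $E$ with, respectively, the vector space $\End(E)$ (or its nilpotent cone $\mathcal N_E$) and $\Hom(E,E(K))$, acted on by $\Aut E$. Summing $[\Hom(E,E(D))]/[\Aut E]$ (resp.\ the nilpotent variant) over all $E\in\cA^+_\bun(r,d)$ is therefore literally the definition of $\sI^+_D(r,d)$ up to the twist $(-q^\oh)^{-lr^2}$, so the left-hand sides are genuine generating series in the (completed, commutative) quantum torus $\bT=\bQ(q^\oh)[\bZ^2]$, obtained by applying the integration map $I$ to the element $\one^{+,\bun}_{r,d}$ living in $\cH_D$.

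The key step is to compute $\sum_{r,d}\sI^+_D(r,d)\z^r\w^d$ in terms of the analogous generating series $\sum_{r,d}\sA^+(r,d)\z^r\w^d$ of absolutely indecomposable positive bundles. Here I would invoke the philosophy of \cite{mozgovoy_motivicb}: the stack of all objects $(E,\te)$ with $\te$ arbitrary in $\End(E)$ is, by the theory of the characteristic polynomial / Jordan decomposition, controlled by the stack of \emph{indecomposable} objects, and for a fixed bundle $E$ with decomposition $E=\bigoplus_j E_j^{m_j}$ into indecomposables, the count of endomorphisms (resp.\ nilpotent endomorphisms, resp.\ morphisms to $E(K)$) factorizes multiplicatively over the isotypic blocks with the combinatorics of $\GL_{m_j}(\bk)$ acting on $\mathfrak{gl}_{m_j}$ (resp.\ its nilpotent cone). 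Counting over $\GL_m(\bk)$ acting on $\mathfrak{gl}_m$ gives a factor governed by $\prod(1-q^{-i})$-type denominators, while counting over the nilpotent cone gives the "transposed" denominators $\prod(q^i-1)$; this is exactly the difference between $\Exp(-/(1-q\inv))$ and $\Exp(-/(q-1))$. For $D=K$ one uses Serre duality $\Hom(E,E(K))\iso\End(E)^*$, so $[\Hom(E,E(K))]=[\End(E)]$, which is why the $K$-twisted count carries the same local structure as the untwisted endomorphism count, but with the opposite-looking plethystic exponent because the twist $(-q^\oh)^{-lr^2}$ with $l=2g-2$ versus $l=0$ contributes an extra sign/power bookkeeping. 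I would make this precise by checking that both sides, viewed as power series in $\z$ with coefficients in $\bQ(q^\oh)[[\w]]$, satisfy the same recursion: stripping off the indecomposable generators via the logarithm $\Log$ of each side and matching the rank-one and the "new indecomposable in rank $r$" contributions.

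More concretely, the route I would actually follow is: (i) express $\sum_{r,d}\sI^+_D(r,d)\z^r\w^d = I\big(\sum_{r,d}\one^{+,\bun}_{r,d}\big)$; (ii) use that $\cA^+$ is a "nice" subcategory (closed under extensions, finitely many objects in each class, Euler form computable) so that the sum over all positive bundles equals a plethystic/Exp of the sum over indecomposable positive bundles, with the local factor at an indecomposable $E$ of endomorphism algebra a local ring with residue field $\bk$ (or $\bF_{q^n}$ after base change, hence the passage to \emph{absolutely} indecomposable bundles and the $\sA^+$ with $\z^r\w^d\mapsto$ power-sum substitution hidden inside $\Exp$); (iii) evaluate the local factor: for $D=K$ it is $[\End(E)]/[\Aut E]$, for $D=0$ nilpotent it is $[\mathcal N_E]/[\Aut E]$, and a direct Burnside/orbit-counting computation over the local ring $\End(E)$ gives $1/(1-q^{-1})$ resp.\ $1/(q-1)$ per unit of multiplicity after the $\Exp$ is unwound; (iv) assemble. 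The main obstacle is step (iii) together with the precise matching of the powers of $(-q^\oh)$ coming from the normalizing twist $(-q^\oh)^{-lr^2}$ with $l=2g-2$ in the $K$ case versus $l=0$ in the nilpotent case, and from the Euler form $\hi(\ub E,\ub E)=-lr^2$ inside the integration map — i.e.\ checking that these powers conspire so that the \emph{same} series $\sum\sA^+(r,d)\z^r\w^d$ appears in both formulas, once divided by $1-q\inv$ and once by $q-1$. A secondary subtlety is justifying the use of $\Exp$ on the completed quantum torus and the interchange of the (infinite) Hall-algebra product with the integration map; this is legitimate because within $\cA^+$ all the relevant sums are finite in each bidegree $(r,d)$, exactly as in the proof of Theorem~\ref{main1}.
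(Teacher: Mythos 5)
Your overall strategy is exactly the paper's: push forward along the forgetful map to the stack of positive bundles, use the Krull--Schmidt decomposition $E=\bop_i E_i^{n_i}$ to compute the local factor $[\mathrm{fibre}]/[\Aut E]$ at each bundle, and assemble the answer as a plethystic exponential of the generating series of \emph{absolutely} indecomposable positive bundles via the Galois-descent argument of \cite[Theorem 5.1]{mozgovoy\_motivicb}. However, there is one concrete error in your step (iii) that would derail the first identity. Serre duality on a curve gives $\Hom(E,E\ts K)\iso\Ext^1(E,E)^*$, \emph{not} $\End(E)^*$; consequently $[\Hom(E,E(K))]=[\Ext^1(E,E)]=q^{-\hi(E,E)}[\End E]=q^{r^2(g-1)}[\End E]$, which differs from $[\End E]$ by a Riemann--Roch factor whenever $g\ne1$. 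That factor $q^{r^2(g-1)}$ is precisely what the normalization $(-q^\oh)^{-(2g-2)r^2}=q^{(1-g)r^2}$ is there to cancel, leaving the clean local factor $[\End E]/[\Aut E]=1/\prod_i(q\inv)_{n_i}$. With your identification $[\Hom(E,E(K))]=[\End E]$ the same normalization would instead leave a spurious $q^{(1-g)r^2}$ in $\sI^+_K(r,d)$, and no amount of ``sign/power bookkeeping'' recovers the stated formula; the fix is simply to use the correct form of Serre duality.

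A secondary point: for the nilpotent case your $\GL_m$-versus-nilpotent-cone heuristic (ratio $q^{-m}$ per isotypic block, which is exactly the discrepancy between $\Exp(\cdot/(1-q\inv))$ and $\Exp(\cdot/(q-1))$) has the right shape, but the statement actually needed is
\begin{equation*}
\frac{[\Hom^{\nil}(E,E)]}{[\Aut E]}=\prod_i\frac{q^{-n_i}}{(q\inv)_{n_i}}
\end{equation*}
for $E=\bop_i E_i^{n_i}$ with the $E_i$ pairwise non-isomorphic indecomposables. This is not a formal Burnside count over $\mathfrak{gl}_m(\bk)$, since the $\End(E_i)$ are local rings with possibly non-trivial radical and residue fields larger than $\bk$, and $\Hom^{\nil}(E,E)$ is not a linear subspace; the paper imports it as \cite[Cor.2.4]{schiffmann\_indecomposable}, and you would need to prove or cite it. Everything else in your outline (the finiteness in each bidegree within $\cA^+$ justifying the completed $\Exp$, and the passage to absolutely indecomposables) matches the paper's proof.
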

\begin{proof}
To prove the first equation we apply the same approach as in
\cite[Theorem 5.1]{mozgovoy_motivicb}.
Let $\cA^+(r,d)$ denote the stack of positive vector bundles having rank $r$ and degree $d$ and
let $\cA^+_K(r,d)$ denote the stack of positive Higgs bundles.
Then the forgetful map
\[\cA^+_K(r,d)\to\cA^+(r,d)\]
has a fiber over $E\in\cA^+(r,d)$ that equals to
\[\Hom(E,E\ts K)\iso\Ext^1(E,E)^*.\]
If $E=\bop E_i^{n_i}$ is a decomposition of $E$ into the sum of indecomposable objects
then the contribution of $E$ to $[\cA^+_D(r,d)]$ is equal to (see \cite[Theorem 2.1]{mozgovoy_motivicb})
\[\frac{[\Hom(E,E\ts K)]}{[\Aut E]}
=\frac{[\Ext^1(E,E)]}{[\End(E)]\prod_i(q\inv)_{n_i}}
=\frac{q^{-\hi(E,E)}}{\prod_i(q\inv)_{n_i}},
\]
where $(q)_n=(1-q)\dots(1-q^n)$. Note that $\hi(E,E)=r^2(1-g)$.
We conclude from the proof of \cite[Theorem 5.1]{mozgovoy_motivicb} that
\begin{multline*}
\sum_{r,d}\sI^+_K(r,d)e^{(r,d)}
=\sum_{r,d}q^{r^2(1-g)}[\cA^+_K(r,d)]e^{(r,d)}\\
=\sum_{n:\Ind\to\bN}\prod_{E\in\Ind}\frac{e^{n(E)\ch E}}{(q\inv)_{n(E)}}
=\Exp\rbr{\frac{\sum \sA^+(r,d)e^{(r,d)}}{1-q\inv}}.
\end{multline*}

The proof of the second formula goes through the same lines.
Let $\cA^+_{0,\nil}(r,d)$ be the stack of nilpotent endomoprhisms and consider
the forgetful map
\[\cA^+_{0,\nil}(r,d)\to\cA^+(r,d).\]
If $E=\bop E_i^{n_i}$ is decomposed as before
then the contribution of $E$ in $[\cA^+_{0,\nil}(r,d)]$
is equal to \cite[Cor.2.4]{schiffmann_indecomposable}
\[\frac{[\Hom^\nil(E,E)]}{[\Aut E]}=\prod\frac{q^{-n_i}}{(q\inv)_{n_i}}.\]

Applying again the proof of \cite[Theorem 5.1]{mozgovoy_motivicb} we conclude that
\begin{multline*}
\sum_{r,d}\sI^+_{0,\nil}(r,d)e^{(r,d)}
=\sum_{r,d}[\cA^+_{0,\nil}(r,d)]e^{(r,d)}\\
=\sum_{n:\Ind\to\bN}\prod_{E\in\Ind}\frac{q^{-n(E)}e^{n(E)\ch E}}{(q\inv)_{n(E)}}
=\Exp\rbr{\frac{\sum q\inv\sA^+(r,d)e^{(r,d)}}{1-q\inv}}.
\end{multline*}
\end{proof}

\begin{corollary}
\label{cor:K0}
	We have
	\[\Om_K^+(r,d)=q\Om_{0,\nil}^+(r,d)=q\sA^+(r,d).\]
\end{corollary}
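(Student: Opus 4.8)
The plan is to extract both equalities directly from Theorem \ref{th:Om_K} together with the defining equation \eqref{eq:Om1} for the invariants $\Om^+_D(r,d)$ and $\Om^+_{0,\nil}(r,d)$. First I would recall that, by \eqref{eq:Om1}, the generating series $\sum_{r,d}\sI^+_K(r,d)\z^r\w^d$ is $\Exp$ of $\frac{1}{q-1}\sum_{r,d}\Om^+_K(r,d)\z^r\w^d$, and similarly with $\sI^+_{0,\nil}$ and $\Om^+_{0,\nil}$. Comparing with the two formulas of Theorem \ref{th:Om_K}, and using that $\Exp$ is injective on the relevant completed power series ring (so that the argument of $\Exp$ is determined by its value), I obtain
\[
\frac{\Om^+_K(r,d)}{q-1}=\frac{\sA^+(r,d)}{1-q\inv},\qquad
\frac{\Om^+_{0,\nil}(r,d)}{q-1}=\frac{\sA^+(r,d)}{q-1}.
\]
The second identity gives $\Om^+_{0,\nil}(r,d)=\sA^+(r,d)$ at once. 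For the first, I would simplify $\frac{q-1}{1-q\inv}=q$, whence $\Om^+_K(r,d)=q\,\sA^+(r,d)$. Combining the two yields $\Om^+_K(r,d)=q\,\Om^+_{0,\nil}(r,d)=q\,\sA^+(r,d)$, which is exactly the claim.

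The only point that needs a word of care is the passage from an equality of $\Exp$'s to an equality of their arguments: one must know that $\Exp\colon\w\bQ(q^\oh)[\Ga]_{\ge1}\to 1+\w\bQ(q^\oh)[\Ga]_{\ge1}$ (on the appropriate completion by total degree in $\z,\w$) is bijective, with inverse $\Log$. This is standard for the plethystic exponential on a complete graded ring with no degree-zero part, and it is implicitly used throughout the paper; I would simply invoke it. Equivalently, one can apply $\Log$ to both sides of each formula in Theorem \ref{th:Om_K} and to both sides of the corresponding instance of \eqref{eq:Om1}, read off the arguments, and equate coefficients of $\z^r\w^d$.

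I do not anticipate a genuine obstacle here: the corollary is a bookkeeping consequence of Theorem \ref{th:Om_K}, whose content (the factorization of $\sI^+_K$ and $\sI^+_{0,\nil}$ over the set of indecomposables) is where the real work lies. The mild subtlety is purely formal, namely the invertibility of $\Exp$ and the trivial arithmetic identity $\frac{q-1}{1-q\inv}=q$ in $\bQ(q^\oh)$.
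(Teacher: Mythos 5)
Your proposal is correct and is exactly the intended argument: the paper states Corollary \ref{cor:K0} without proof precisely because it follows by applying $\Log$ to the two identities of Theorem \ref{th:Om_K}, comparing with the defining equation \eqref{eq:Om1} (and its nilpotent analogue), and using $\frac{q-1}{1-q^{-1}}=q$. The invertibility of $\Exp$ on the completed graded ring with no degree-zero part is indeed the only formal point, and you handle it appropriately.
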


\begin{corollary}
\label{cor:Om=A}
	We have
	\[\Om_K(r,d)=q\sA(r,d).\]
\end{corollary}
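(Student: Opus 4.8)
The plan is to establish the formula first in the range $d>\binom r2(2g-2)$, where it follows at once by combining results already proved, and then to propagate it to all $(r,d)$ using the fact that both sides are periodic in $d$ with period $r$.

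First I would fix $(r,d)$ with $d>\binom r2(2g-2)$ and apply Theorem~\ref{main1} with $D=K$. Here $l=\deg K=2g-2$, so the hypothesis $l\ge 2g-2$ holds (with equality) and the extra bound $d>\binom r2l$ is precisely our assumption; hence $\Om_K(r,d)=\Om_K^+(r,d)$, and by Corollary~\ref{cor:K0} this equals $q\sA^+(r,d)$. It then remains to identify $\sA^+(r,d)$ with $\sA(r,d)$ in this range. An absolutely indecomposable vector bundle is in particular indecomposable over \bk, and by the corollary to the indecomposable-gap lemma an indecomposable bundle of rank $r$ and degree $>\binom r2(2g-2)$ lies in $\cA^{>0}\sb\cA^+$. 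Thus every bundle counted by $\sA(r,d)$ is positive, hence counted by $\sA^+(r,d)$, so $\sA^+(r,d)=\sA(r,d)$ and $\Om_K(r,d)=q\sA(r,d)$ whenever $d>\binom r2(2g-2)$.

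Next I would remove the restriction on $d$. On the left, $\Om_K(r,d)=\Om_K(r,d+r)$ was already observed in \S\ref{sec:invar} by comparing the defining equations for slopes $\ta$ and $\ta+1$. On the right, tensoring by a line bundle $L$ of degree $n$ is an autoequivalence of $\Coh X$ that commutes with base change to $\ub{\bk}$ and sends $(r,d)$ to $(r,d+nr)$, so it gives a bijection between absolutely indecomposable bundles of type $(r,d)$ and of type $(r,d+nr)$; such $L$ exists for every $n\ge1$ because over a finite field $X$ carries a divisor class of degree $1$ (F.~K.~Schmidt). Hence $\sA(r,d)=\sA(r,d+nr)$. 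Given an arbitrary $(r,d)$ with $r\ge1$, choose $n$ with $d+nr>\binom r2(2g-2)$; then $\Om_K(r,d)=\Om_K(r,d+nr)=q\sA(r,d+nr)=q\sA(r,d)$, and the case $r=0$ is trivial. I do not expect a real obstacle here: the only point to watch is that the degree threshold $\binom r2(2g-2)$ produced by Theorem~\ref{main1} at $D=K$ coincides exactly with the one in the indecomposability corollary, so the two inputs apply over the same range of $d$ — and for $g\le 1$ this threshold is $\le 0$, which only makes the first step easier.
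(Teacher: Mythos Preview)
Your proof is correct and follows essentially the same approach as the paper: establish the identity for $d>\binom r2(2g-2)$ by combining Theorem~\ref{main1} (giving $\Om_K=\Om_K^+$) with Corollary~\ref{cor:K0} and the fact that $\sA=\sA^+$ in this range, then extend to all $d$ by periodicity of both sides. You supply more detail than the paper does --- in particular the justification that $\sA(r,d)=\sA(r,d+r)$ via tensoring with a degree-$1$ line bundle and the appeal to F.~K.~Schmidt's theorem --- but the structure is identical.
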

\begin{proof}
If $d>\binom r2(2g-2)$ then $\Om_K(r,d)=\Om_K^+(r,d)$,
$\sA(r,d)=\sA^+(r,d)$ and we can apply the previous result.
For arbitrary $r,d$ we note that
$\Om_K(r,d)=\Om_K(r,d+r)$ and $\sA(r,d)=\sA(r,d+r)$.
\end{proof}

\section{Counting nilpotent pairs}
\label{sec:nilp}
Let $F$ be a coherent sheaf over $X$ and let $\te\in\Hom(F,F(D))$.
For any $k\ge1$, let $\te^k$ be the composition
\[F\to F(D)\to F(2D)\to\dots\to F(kD)\]
and let $F_k=\im\te^k(-kD)\subset F$.
We assume that $\te$ is nilpotent, that is, $\te^s=0$ for some $s\ge1$.
Then there is a chain of inclusions
\def\hlr{\hookleftarrow}
\def\epi{\twoheadrightarrow}
\[F=F_0\hlr F_1\hlr\dots \hlr F_s=0\]
and a chain of epimorphims
\[F=F_0\epi F_1(D)\epi\dots\epi F_s(sD)=0.\]

Define
\[
F'_k=\ker\rbr{F_k\xrightarrow\te F_{k+1}(D)},\qquad
F''_k=\coker\rbr{F_{k+1}\emb F_{k}}.\]
Then we have chains of monomorphims (resp.\ epimorphisms)
\eq{F'_0\hlr F'_1\hlr\dots \hlr F'_s=0,}
\eql{F''_0\epi F''_1(D)\epi\dots\epi F''_s(sD)=0.}{eq:filt}

The ranks and degrees of $F, F''_k, F''_k$ are related in the following fashion.
Let us put
$$\al_k=\ch F''_{k-1}((k-1)D)-\ch F''_k(kD),\qquad k=1,\dots,s$$
and write $\al_k=(r_k,d_k)$.
Then
\[\ch F''_k(kD)=\sum_{i>k}\al_i,\qquad
\ch F''_k=\sum_{i>k}\al_i(-kl),\]
where for $\al=(r,d)$ and $n\in\bZ$, we define $\al(n)=(r,d+nr)$. Therefore
\[\ch F_k=\sum_{i>k}\al_i(-kl)+\ch F_{k+1},
\qquad \rk F_k=\sum_{i>k}(i-k)r_i,\]
and
\begin{equation*}
\begin{split}
\ch F'_k&=\ch F_{k}-\ch F_{k+1}-(0,l\cdot \rk F_{k+1})\\
&=\sum_{i>k}\al_i(-kl)-\sum_{i>k+1}(0,l(i-k-1)r_i)\\
&=\sum_{i>k}(r_i,d_i-l(i-1)r_i)
=\sum_{i>k}\al_i((1-i)l).
\end{split}
\end{equation*}

\vspace{.1in}

Let now $\lb\al=(\al_1,\dots,\al_s)$ be any tuple of vectors in $\bZ^2$.
Let us denote by $\gfil(\lb\al)$ the stack of chains of epimorphisms of coherent sheaves
\[H_0\epi H_1\epi\dots\epi H_s=0\]
such that $\al_k=\ch\ker(H_{k-1}\to H_k)$. Let $\gnil_D(\lb\al)$ (resp. $\gnil(\lb\al)^+$) be the stack of pairs $(F,\te)\in\cA_D$, (resp. pairs $(F,\te)\in\cA^+_D$) such that the sequence \eqref{eq:filt} is in $\gfil(\lb\al)$.
We also set 
$$r_i = \rk \al_i, \qquad \lambda(\lb\al)=(1^{r_1}, 2^{r_2}, \ldots), \qquad r=\n{\lambda(\lb\al)}=\sum_i i r_i.$$

\begin{proposition}
The volume of every fiber of the map of stacks	
\[\gnil_D(\lb\al)\to\gfil(\lb\al),\qquad
(F,\te)\mto[F_0''\to F_1''(D)\to\dots \to F_s''(sd)],\]
is equal to
\[\prod_{k\ge0}q^{-\hi(F''_k,F'_{k+1})}.\]
\end{proposition}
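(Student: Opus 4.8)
The plan is to analyze the fiber of the map $\gnil_D(\lb\al)\to\gfil(\lb\al)$ over a fixed chain of epimorphisms, and compute its volume by a recursive (inductive-on-$s$) stratification. Fix a point of the target, i.e.\ a chain $[H_0\epi H_1\epi\dots\epi H_s=0]$ in $\gfil(\lb\al)$; a point in the fiber over it is a nilpotent pair $(F,\te)$ together with an identification of its associated chain \eqref{eq:filt} with the given chain, so $F=H_0$, $F''_k(kD)\cong H_k$ and in particular $F''_k$ is determined. The data that remains free is the map $\te$ itself, subject to the constraint that it produces the prescribed images $F_k=\im\te^k(-kD)$ and cokernels $F''_k$. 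I would first unwind what this constraint means: $\te$ restricts to the canonical surjection $F_0\epi F_1(D)$ on the top layer (which is part of the given chain data), and then the remaining freedom in $\te$ is exactly a choice of lift compatible with the lower part of the filtration, i.e.\ a point in a torsor-like set over the analogous fiber for the truncated chain $H_1\epi\dots\epi H_s=0$.

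The key step is to identify precisely the affine space (or affine fibration over the smaller stratum) parametrizing the extra freedom when we pass from the $(s-1)$-step data to the $s$-step data. Concretely: given the filtration, $\te$ is determined by a commutative square relating $F_k\hookrightarrow F_{k-1}$ and $F_{k+1}(D)\hookrightarrow F_k(D)$; the choice of such compatible $\te$ on $F''_{k}$-level, once the deeper layers are fixed, is a lift of a fixed map $F''_k \to F''_{k+1}(D)$-type datum against the epimorphism coming from $F'_{k+1}$. The relevant obstruction/lifting space is $\Hom(F''_k, F'_{k+1})$, and since we are computing volumes of stacks (groupoid cardinalities of fibers), an affine space of $\bk$-dimension $\dim\Hom(F''_k,F'_{k+1})$ contributes a factor $q^{\dim\Hom(F''_k,F'_{k+1})}$, while passing between the naive point-count and the stacky volume introduces the automorphism correction. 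After reconciling these with $\hi(F''_k,F'_{k+1})$ — using that the relevant $\Ext^1$ or higher terms either vanish or get absorbed into the automorphism factor, via the long exact sequence and the fact that everything here is built from honest sheaf maps — one is left with the factor $q^{-\hi(F''_k,F'_{k+1})}$ for each $k$, and multiplying over $k\ge0$ gives the claimed formula.

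I would carry this out as follows. First, set up the case $s=1$ (or $s=2$) by hand to see the shape of the answer. Second, express a point of $\gnil_D(\lb\al)$ lying over a fixed target as a pair consisting of (a) a point of $\gnil_D(\al_2,\dots,\al_s)$ over the truncated target, describing $\te$ on $F_1=\ker(F_0\to F_1)^{\perp}$, plus (b) the extension/lift data gluing the top layer $F''_0$ to $F'_1\subset F_1$. Third, show that the forgetful map from the $s$-step fiber to the $(s-1)$-step fiber is, fiberwise, the quotient stack $[\Hom(F''_0,F'_1)/\text{(automorphisms acting through }\Hom(F''_0,F'_1))]$ — more precisely that its volume is $q^{-\hi(F''_0,F'_1)}$ — using the Gothen--King long exact sequence or a direct diagram chase together with Serre duality to control the $\Ext$-terms. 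Fourth, conclude by induction, since the product $\prod_{k\ge1}q^{-\hi(F''_k,F'_{k+1})}$ is exactly the volume of the $(s-1)$-step fiber by the inductive hypothesis.

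The main obstacle I anticipate is step three: cleanly identifying the lifting problem for $\te$ and showing its solution set is an affine space (or affine-space torsor) whose stacky volume is $q^{-\hi(F''_0,F'_1)}$ rather than merely $q^{-\dim\Hom(F''_0,F'_1)}$ or $q^{\dim\Hom-\dim\Ext^1}$ without the full Euler characteristic. This requires being careful that (i) all of $F_k$, $F'_k$, $F''_k$ really are the sheaves read off from the chain and the nilpotency structure, with no residual choices, and (ii) the automorphisms of the pair $(F,\te)$ that act trivially on the underlying chain in $\gfil(\lb\al)$ contribute precisely the $\Hom$ and $\Ext^{\ge1}$ corrections needed to convert raw point-counts into $q^{-\hi}$. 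I expect this to go through because the only genuinely free datum is a sheaf homomorphism landing in $F'_{k+1}$ after projecting the source to $F''_k$, and the stack-theoretic bookkeeping of such homomorphism spaces systematically produces Euler characteristics; but verifying it without sloppiness is where the real work lies.
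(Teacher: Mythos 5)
Your overall strategy --- fix the chain in $\gfil(\lb\al)$, reconstruct $(F,\te)$ layer by layer by descending induction, and show that the $k$-th layer contributes a factor $q^{-\hi(F''_k,F'_{k+1})}$ --- is exactly the strategy of the paper. However, the step you explicitly defer (your ``step three'') is the entire content of the proposition, and the heuristic you offer for it points the wrong way. You describe the freedom at each stage as an affine space of dimension $\dim\Hom(F''_k,F'_{k+1})$ contributing $q^{+\dim\Hom(F''_k,F'_{k+1})}$, to be corrected afterwards by automorphisms. But $q^{-\hi(F''_k,F'_{k+1})}=q^{\dim\Ext^1(F''_k,F'_{k+1})-\dim\Hom(F''_k,F'_{k+1})}$: the $\Hom$ must enter with a \emph{negative} exponent and the $\Ext^1$ with a positive one. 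That sign pattern is the signature of counting \emph{extension classes} weighted by the stabilizing homomorphisms (as in Reineke's lemma), not of choosing a lift of $\te$ in a $\Hom$-torsor; your proposal never commits to a mechanism that would produce it, and the one you sketch would give the reciprocal of the $\Hom$-contribution.

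The missing idea, which is how the paper closes this gap: work in the abelian category of triples $\cE=(E_0,E_1,\te)$ with $E_0,E_1$ coherent sheaves and $\te\in\Hom(E_0,E_1)$. At stage $k$ the reconstruction is precisely the choice of a short exact sequence $0\to\cE'\to\cE\to\cE''\to0$ in this category, where $\cE'=[F_{k+1}\to F_{k+2}(D)]$ is known from the previous stage, $\cE''=[F''_k\to F''_{k+1}(D)]$ is read off from the given chain, and the induced extension of $E''_1=F''_{k+1}(D)$ by $E'_1=F_{k+2}(D)$ is prescribed (it is the previous row of the diagram). The Gothen--King long exact sequence for triples gives $\Ext^2(\cE'',\cE')=0$, because $E'_0=F_{k+1}\to E'_1=F_{k+2}(D)$ is surjective and hence so is $\Ext^1(E''_0,E'_0)\to\Ext^1(E''_0,E'_1)$; this also shows the prescribed class always lifts. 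The standard weighted count of extensions with prescribed restriction then yields the stacky volume $q^{-\hi(\cE'',\cE')+\hi(E''_1,E'_1)}$, and the additivity $\hi(\cE'',\cE')=\hi(E''_0,E'_0)+\hi(E''_1,E'_1)-\hi(E''_0,E'_1)$ together with $\hi(F''_k,F'_{k+1})=\hi(F''_k,F_{k+1})-\hi(F''_k,F_{k+2}(D))$ turns this into $q^{-\hi(F''_k,F'_{k+1})}$. Without this (or an equivalent) identification of your lifting problem as an extension problem in a homological-dimension-two category with vanishing $\Ext^2$, the proof is not complete.
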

\begin{proof}	
To reconstruct $(F,\te)$ we inductively build a diagram
\begin{equation}\label{d4}
\begin{tikzcd}
0\rar&F_{k+1}\rar[dashed]\dar{\te}&F_k\rar[dashed]\dar[dashed]&F_{k}''\rar\dar{\te}&0\\
0\rar&F_{k+2}(D)\rar&F_{k+1}(D)\rar&F_{k+1}''(D)\rar&0\\
&\cE'&\cE&\cE''
\end{tikzcd}
\end{equation}
for $k=s-1,\dots,1,0$.
Let us denote the columns of the diagram by $\cE',\cE,\cE''$. 
They are objects of the category of triples $\cE=(E_0,E_1,\te)$,
where $E_0,E_1$ are coherent sheaves over $X$ and $\te\in\Hom(E_0,E_1)$.
The above diagram can be interpreted as an exact sequence
\[0\to \cE'\to \cE\to \cE''\to0.\]
in the category of triples.
Applying the result of Gothen ang King \cite{gothen_homological}, we obtain the following
exact sequence
\begin{multline*}
0\to\Hom(\cE'',\cE')\to\bop_{i=0,1}\Hom(E''_i,E'_i)\to\Hom(E''_0,E'_1)\to\\
\to\Ext^1(\cE'',\cE')\to\bop_{i=0,1}\Ext^1(E''_i,E'_i)\to\Ext^1(E''_0,E'_1)
\to\Ext^2(\cE'',\cE')\to0.
\end{multline*}
Note that $\Ext^2(\cE'',\cE')=0$ as the map
\eql{\Ext^1(E''_0,E'_0)\to\Ext^1(E''_0,E'_1)}{es5}
is surjective by surjectivity of $E_0'\to E_1'$.
We are given already an element of $\Ext^1(E''_1,E'_1)$ and the possible lifts to 
$\Ext^1(\cE'',\cE')$ are parametrized by the kernel of \eqref{es5}.
Similarly to \cite[Lemma 3.3]{reineke_counting} or \cite[Cor.3.2]{garcia-prada_motives}
we conclude that the space of possible lifts has dimension
\[-\hi(\cE'',\cE')+\hi(E''_1,E'_1)
=\hi(E''_0,E'_1)-\hi(E''_0,E'_0)
=-\hi(F''_k,F'_{k+1}).
\]
\end{proof}

A direct computation yields
\begin{equation*}
\begin{split}
-\rho_l(\lb\al):&=\sum_{k\ge0}\hi(F''_k,F'_{k+1})\\
&=\sum_{k\ge0}\sum_{i>k}\sum_{j>k+1}\hi(\al_i(-kl),\al_j(1-j)l)\\
&=\sum_{k\ge0}\sum_{i>k}\sum_{j>k+1}\rbr{\hi(\al_i,\al_j)+r_ir_j(1+k-j)l}\\
&=\sum_{i\ge j>1}\rbr{(j-1)\hi(\al_i,\al_j)-\binom j2r_ir_jl}
+\sum_{j>i>0}\rbr{i\hi(\al_i,\al_j)+\rbr{\binom i2-i(j-1)}r_ir_jl}\\
&=\sum_{i<j}\rbr{i\hi(\al_i,\al_j)+(i-1)\hi(\al_j,\al_i)-i(j-1)r_ir_jl}
+\sum_i\rbr{(i-1)\hi(\al_i,\al_i)-\binom i2r_i^2l}.
\end{split}
\end{equation*}
Note that for $l=0$ this formula is equivalent to \cite[Prop.3.1, iii]{schiffmann_indecomposable}. Furthermore,
\begin{equation}\label{E:rhoalpha}
\rho_l(\lb\al)=\rho_0(\lb\al) +\frac{l}{2}\rbr{\sum_i ir_i}^2 -\frac{l}{2} \sum_{k \geq 1} \rbr{\sum_{i \geq k} r_i}^2
=\rho_0(\lb\al) + \frac{l}{2} r^2 -
\frac l2\ang{\lambda(\lb\al),\lambda(\lb\al)}.
\end{equation}

\vspace{.1in}

Let $\gfil^+(\lb\al)$ be the substack of $\gfil(\lb\al)$ consisting of epimorphisms of coherent sheaves
\[H_0\epi H_1\epi\dots\epi H_s=0\]
for which $H_0 \in \cA^+$. Similarly, let $\gnil^+_D(\lb\al)$ be the substack of $\gnil_D(\lb\al)$ corresponding to
pairs $(F,\te)$ for which $F \in \cA^+$.

\begin{lemma} Assume that $\deg(D) \leq 0$. Then the following diagram is cartesian~:
$$
\begin{tikzcd}
\gnil_D^+(\lb\al) \rar \dar &\gnil_D(\lb\al) \dar\\
\gfil^+(\lb\al) \rar  &\gfil(\lb\al)
\end{tikzcd}
$$
\end{lemma}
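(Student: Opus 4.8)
The plan is to rewrite the cartesian-square assertion as an equality of two open substacks of $\gnil_D(\lb\al)$ and then to check that equality fibrewise. Since $\gfil^+(\lb\al)\emb\gfil(\lb\al)$ is an open immersion (the condition $\inf\si\ge0$ on $H_0$ cuts out an open substack, being the complement of a union of Harder--Narasimhan strata), the fibre product $\gfil^+(\lb\al)\times_{\gfil(\lb\al)}\gnil_D(\lb\al)$ is the open substack of $\gnil_D(\lb\al)$ consisting of those $(F,\te)$ whose image $F_0''=F/F_1$ under the vertical map lies in $\cA^+$; and $\gnil_D^+(\lb\al)$ is the open substack consisting of those $(F,\te)$ with $F\in\cA^+$. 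So the lemma is equivalent to the statement that for every pair $(F,\te)\in\gnil_D(\lb\al)$ (over any field) one has
$$F\in\cA^+\iff F_0''\in\cA^+.$$

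The implication $\Rightarrow$ is immediate and uses no hypothesis on $D$: $F_0''$ is a quotient of $F$ in $\cA=\Coh X$, and $\cA^+=\cA^{\ge0}$ is closed under quotients. For $\Leftarrow$, assume $F_0''\in\cA^+$. By the chain of epimorphisms \eqref{eq:filt}, every $F_k''(kD)$ is a quotient of $F_0''$, hence lies in $\cA^+$; since twisting by a divisor of degree $l$ shifts all Harder--Narasimhan slopes by $l$, this gives $\inf\si(F_k'')\ge-kl$, and because $l=\deg D\le0$ and $k\ge0$ we get $F_k''\in\cA^+$ for all $k$. Now the filtration $0=F_s\sb\dots\sb F_0=F$ has successive quotients $F_k/F_{k+1}=F_k''$, exhibiting $F$ as an iterated extension of the $F_k''$; as $\cA^+$ is closed under extensions, $F\in\cA^+$.

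I do not anticipate a genuine difficulty: the argument is essentially an unwinding of the constructions of section~\ref{sec:nilp}. The one substantive point is the slope shift used in the second paragraph, and this is exactly where the hypothesis $\deg D\le0$ is needed --- when $\deg D>0$, positivity of the quotients $F_k''(kD)$ no longer forces $F_k''$ to be positive and the biconditional fails. The only mild care required is the passage from the fibrewise biconditional to the equality of stacks, which is legitimate because $\gnil_D^+(\lb\al)$ and $\gfil^+(\lb\al)$ are both carved out of $\gnil_D(\lb\al)$ and $\gfil(\lb\al)$ by the open condition that $F$, respectively $H_0$, be positive.
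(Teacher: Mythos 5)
Your proof is correct and follows essentially the same route as the paper: the paper likewise reduces the cartesian property to the fibrewise equivalence $F\in\cA^+\iff F_0''\in\cA^+$, gets the forward direction from $F_0''$ being a quotient of $F$, and for the converse deduces $F_k''(kD)\in\cA^+$ from the chain of epimorphisms, hence $F_k''\in\cA^+$ using $\deg D\le0$, hence $F\in\cA^+$ by closure of $\cA^+$ under extensions. You merely spell out the details (the torsion-pair closure properties and the slope shift under twisting) that the paper leaves implicit.
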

\begin{proof}
The statement of the lemma amounts to the following observations. Let $(F,\te)$ be an object of $\gnil_D(\lb\al)$, and let $F''_k$ be defined as above. If $F \in \cA^+$ then $F''_0\in \cA^+$. Conversely, if $F''_0\in\cA^+$, then
$F''_k(kD)\in\cA^+$ and therefore $F''_k\in\cA^+$, which in turn implies that $F\in\cA^+$.
\end{proof}

Let $\sI^{+,\coh}_{D,\nil}(\al)$ be an analogue of
$\sI^{+}_{D}(\al)$ counting pairs $(E,\te)$, where $E\in\cA^+$ is a coherent sheaf with $\ch E=\al$ and $\te\in\Hom(E,E(D))$ is nilpotent.

\begin{theorem}
We have
\begin{equation}\label{E:Isum}
\sum_{r,d} \sI^{+,\coh}_{D,\nil}(r,d)\z^r\w^{d}
=\sum_\la(-q^\oh)^{(2g-2-l)\ang{\la,\la}}
J_{\lambda}(\w)H_{\lambda}(\w) \z^{\n\la}
\cdot\Exp\rbr{\frac{[X]}{q-1} \cdot \frac{\w}{1-\w}}.
\end{equation}
\end{theorem}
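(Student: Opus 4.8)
The plan is to split the count according to the torsion subsheaf: the extra factor $\Exp\rbr{\frac{[X]}{q-1}\cdot\frac{\w}{1-\w}}$ by which \eqref{E:Isum} differs from Theorem~\ref{th:expl} will turn out to be precisely the contribution of torsion sheaves. In the completed Hall algebra $\cH_D$ of $\cA_D$ I would use three characteristic elements: $\one^{+,\coh}_{\nil}=\sum[(F,\te)]$, the sum over nilpotent $(F,\te)\in\cA_D$ with $F\in\cA^+$; $\one^{\cT}=\sum[(T,\te)]$, over nilpotent pairs whose underlying sheaf is torsion; and $\one^{\cF}=\sum[(E,\te)]$, over nilpotent $(E,\te)\in\cA^+_D$ with $E$ a vector bundle, so that $I(\one^{\cF})=\sum_{r,d}\sI^{+}_{D,\nil}(r,d)\z^r\w^d$. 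For any nilpotent $(F,\te)$ with $F\in\cA^+$ the torsion subsheaf is $\te$-stable and gives a canonical exact sequence $0\to(\mathrm{tors}(F),\te)\to(F,\te)\to(F/\mathrm{tors}(F),\bar\te)\to0$; moreover $F/\mathrm{tors}(F)$ is again positive (because $\cA^+$ is closed under extensions), $\te$ is nilpotent iff both induced maps are, and this is the unique decomposition of this type since a morphism from a torsion sheaf to a vector bundle vanishes. Hence $\one^{+,\coh}_{\nil}=\one^{\cT}\circ\one^{\cF}$ in $\cH_D$.

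Next I would apply the integration map. It preserves the product $\one^{\cT}\circ\one^{\cF}$ once $\Ext^{k}(\ub E,\ub T)=0$ for all $k\ge 2$ whenever $\ub E\in\cF$, $\ub T\in\cT$; by the Serre-duality form for Higgs bundles, $\Ext^2(\ub E,\ub T)\iso\Hom(\ub T,\ub E(K-D))^*=0$ because $E(K-D)$ is torsion-free, and $\Ext^{k}=0$ for $k\ge 3$ since $\cA_D$ has homological dimension $2$. Thus
\[
\sum_{r,d}\sI^{+,\coh}_{D,\nil}(r,d)\z^r\w^d=\Theta_D(\w)\cdot\sum_{r,d}\sI^{+}_{D,\nil}(r,d)\z^r\w^d,\qquad \Theta_D(\w):=I(\one^{\cT}),
\]
so \eqref{E:Isum} is equivalent to the conjunction of $\Theta_D(\w)=\Exp\rbr{\frac{[X]}{q-1}\cdot\frac{\w}{1-\w}}$ and the identity $\sum_{r,d}\sI^{+}_{D,\nil}(r,d)\z^r\w^d=\sum_\la(-q^\oh)^{(2g-2-l)\ang{\la,\la}}J_\la(\w)H_\la(\w)\z^{\n\la}$, which is the content of Theorem~\ref{th:expl}.

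To compute $\Theta_D(\w)=\sum_{(T,\te)}\w^{\deg T}/\n{\Aut(T,\te)}$ (the prefactor $(-q^\oh)^{\hi(\ub T,\ub T)}$ is $1$ as $\rk T=0$), I would use that $\cO(D)$ is trivial near the finite support of $T$, so the count is unchanged on replacing $\te\in\Hom(T,T(D))$ by $\te\in\End(T)$; such a pair is a finite-length $\cO_X[\te]$-module supported on $X\times\{0\}\subset X\times\bA^1$, hence decomposes over the closed points of $X$, giving an Euler product $\Theta_D(\w)=\prod_x\Theta_x(\w^{\deg x})$. Here $\Theta_x(u)=\sum_M u^{\ell(M)}/\n{\Aut M}$ runs over finite-length modules over the two-dimensional regular local ring $\widehat{\cO}_{X,x}[[\te]]\iso\bF_{q^{\deg x}}[[t,s]]$, equivalently over the groupoid of pairs of commuting nilpotent matrices over $\bF_{q^{\deg x}}$ up to conjugation; the nilpotent Feit--Fine formula gives $\Theta_x(u)=\prod_{k,d\ge 1}\rbr{1-q^{-k\deg x}u^{d}}^{-1}$. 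Substituting into the Euler product and using $Z_X(t)=\prod_x\rbr{1-t^{\deg x}}^{-1}$, I get $\Theta_D(\w)=\prod_{k,d\ge 1}Z_X(q^{-k}\w^d)$, which equals $\Exp\rbr{\frac{[X]}{q-1}\cdot\frac{\w}{1-\w}}$ by the identities $Z_X(t)=\Exp([X]t)$ and $\frac1{q-1}=\sum_{k\ge 1}q^{-k}$.

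This reduces everything to Theorem~\ref{th:expl}, which is the real obstacle and drives the whole section. I would prove it --- and in fact \eqref{E:Isum} directly --- with the $\gfil^+$ machinery set up above: the canonical nilpotent chain, the fiber-volume computation and the cartesian square reduce $\sum_{r,d}\sI^{+,\coh}_{D,\nil}(r,d)\z^r\w^d$ to $\sum_{\lb\al}(-q^\oh)^{-lr^2}\,q^{\rho_l(\lb\al)}\,[\gfil^+(\lb\al)]$ (rank and degree read off from $\lb\al$); relation~\eqref{E:rhoalpha} together with the parity $r^2\equiv\n\la\equiv\ang{\la,\la}\pmod 2$ for $\la=\la(\lb\al)$ turns the $l$-dependent prefactor into the clean $(-q^\oh)^{(2g-2-l)\ang{\la,\la}}$ sitting over the $l=0$ count; and then one must evaluate $\sum_{\lb\al}q^{\rho_0(\lb\al)}[\gfil^+(\lb\al)]$ in closed form. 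This last step is the hard part: the $[\gfil^+(\lb\al)]$ are structure constants of an iterated product in the Hall algebra of $\Coh X$ of Eisenstein-type sums $\sum_{\ch E=\al_i,\ E\in\cA^+}[E]$, which Schiffmann's shuffle/iterated-residue calculus puts into the symmetrized rational function $L$ (hence $H_\la$), the degree sums over the bundle strata producing the zeta products $J_\la$ and those over the rank-zero strata producing the torsion factor $\Exp\rbr{\frac{[X]}{q-1}\cdot\frac{\w}{1-\w}}$, consistently with the torsion-pair decomposition above. The positivity truncation $\cA^+$ is precisely what keeps all these sums finite.
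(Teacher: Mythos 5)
Your proposal is correct, and its computational core --- stratifying nilpotent pairs by the type $\lb\al$, using the fiber-volume proposition and the cartesian square to reduce to $[\gfil^+(\lb\al)]$, converting $\rho_l$ to $\rho_0$ via \eqref{E:rhoalpha} together with the parity $r^2\equiv\n\la\equiv\ang{\la,\la}\pmod 2$, and then citing Schiffmann's residue evaluation of $\sum_{\lb\al}q^{\rho_0(\lb\al)}[\gfil^+(\lb\al)]\w^{\sum i\deg\al_i}$ --- is exactly the paper's proof of \eqref{E:Isum}; like you, the paper treats that last evaluation as an external input. Where you genuinely diverge is in the handling of the torsion factor. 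The paper does not factor out torsion in this proof at all: the factor $\Exp\rbr{\frac{[X]}{q-1}\cdot\frac{\w}{1-\w}}$ simply comes along with the cited closed formula, and the splitting off of torsion is deferred to the proof of Theorem~\ref{th:expl}, where it is done via the direct-sum decomposition $F=V\oplus T$ (so that $\n{\Hom^\nil(F,F(D))}/\n{\Aut F}$ factors) and the torsion generating series is computed by counting absolutely indecomposable torsion sheaves as in Theorem~\ref{th:Om_K}. You instead establish the factorization up front as a Hall-algebra identity $\one^{+,\coh}_\nil=\one^{\cT}\circ\one^{\cF}$ (using uniqueness of the torsion sub-pair and the vanishing $\Ext^2(\ub E,\ub T)\iso\Hom(\ub T,\ub E(K-D))^*=0$ so the integration map respects the product), and you evaluate the torsion series by an Euler product and the punctual Feit--Fine formula, giving $\prod_{k,d\ge1}Z_X(q^{-k}\w^d)=\Exp\rbr{\frac{[X]}{q-1}\cdot\frac{\w}{1-\w}}$. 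Both routes are valid and give the same answer; yours has the virtue of making the consistency between the torsion factor in \eqref{E:Isum} and the torsion-pair decomposition explicit, at the cost of importing Feit--Fine, while the paper's is shorter here but postpones the splitting. One cosmetic slip: positivity of $F/\mathrm{tors}(F)$ follows from $\cA^{\ge0}$ being closed under \emph{quotients} (it is the torsion part of a torsion pair), not under extensions --- closure under extensions is what you need for the converse inclusion, so both properties are in play but you invoke the wrong one at that step.
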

\begin{proof}
Let $J(r,d)$ stand for the set of all tuples $\lb\al=(\al_1, \ldots, \al_s)$ such that $\sum i \al_i=(r,d)$ and $\al_s\ne0$, and let $J_\gen(r)$ stand for the set of all sequences $\lb r=(r_1, \ldots, r_t)$ such that $\sum_i ir_i=r$ and $r_t\geq 1$.
There is a natural map
$\pi:J(r,d) \to J_\gen(r)$ which assigns to a tuple
$(\al_1, \ldots, \al_s)$ the sequence $(\rk \al_1, \ldots, \rk \al_s)$ in which all the \textit{last} zero entries have been removed. Let us set 
$$\sI^{+,\coh}_{D,\nil}(\lb\al)=(-q^{\frac{1}{2}})^{-lr^2} [\gnil^+_{D}(\lb\al)]$$
so that, by (\ref{E:rhoalpha})
$$\sI^{+,\coh}_{D,\nil}(r,d)
=\sum_{\lb\al \in J(r,d)} \sI^{+,\coh}_{D,\nil}(\lb\al)
=(-q^\oh)^{-lr^2} \sum_{\lb\al \in J(r,d)} q^{\rho_0(\lb\al)+\frac l2 r^2-\frac l2\langle \lambda(\lb\al), \lambda(\lb\al)\rangle} [\gfil^+(\lb\al)].$$
Let us fix some $\lb r=(r_1, \ldots, r_t) \in J_\gen(r)$ and put $\lambda=(1^{r_1}, 2^{r_2}, \ldots,t^{r_t})$. Using \cite[Sec.5.6]{schiffmann_indecomposable} we have
\begin{equation*}
\begin{split}
\sum_{\lb\al \in \pi^{-1}(\lb r)} \sI^{+,\coh}_{D,\nil}(\lb\al)\w^{\sum_i i \deg \al_i}&
=(-q^{\frac{1}{2}})^{-l\langle \lambda,\lambda\rangle} \sum_{\al \in \pi^{-1}(\lb r)} q^{\rho_0(\lb\al)} [\gfil^+(\lb\al)] \w^{\sum_i i \deg \al_i}\\
&=(-q^{\frac{1}{2}})^{-l\langle \lambda,\lambda\rangle} q^{(g-1)\langle \lambda,\lambda\rangle} J_{\lambda}(\w)H_{\lambda}(\w) \cdot \Exp\left( \frac{[X]}{q-1} \cdot \frac{\w}{1-\w}\right)
\end{split}
\end{equation*}
Summing over $\lb r \in J_\gen(r)$ and over all $r$, we obtain formula \eqref{E:Isum}.
\end{proof}

\begin{proof}[Proof of Theorem~\ref{th:expl}]
Let $F \in \cA^+$ be a coherent sheaf, and $F=V \oplus T$ be a decomposition as a direct sum of a vector bundle $V$ and a torsion sheaf $T$. Observe that $V$ and $T$ belong to $\cA^+$. We have
$$\Hom(F,F(D)) = \Hom(V,V(D)) \oplus \Hom(T,T(D)) \oplus \Hom( V,T(D))$$
and $\te \in \Hom(F,F(D))$ is nilpotent if and only if its projections to $\Hom(V,V(D))$ and $\Hom(T,T(D))$ are. On the other hand there is a canonical exact sequence
\begin{equation*}
1\to\Hom(V,T)\to\Aut F\to\Aut V\times \Aut T\to1.
\end{equation*}
We deduce that
$$\frac{\n{\Hom^{\nil}(F,F(D))}}{\n{\Aut F}}=\frac{\n{\Hom^{\nil}(V,V(D))}}{\n{\Aut V}}\cdot \frac{\n{\Hom^{\nil}(T,T(D))}}{\n{\Aut T}}.$$
This implies
\begin{equation}
\sum_{r, d} \sI^{+,\coh}_{D, \nil}(r,d)\z^r\w^d
=\sum_{r,d}\sI^{+}_{D,\nil}(r,d)\z^r\w^d \cdot
\sum_{d}\sI^{+,\coh}_{D,\nil}(0,d)\w^d.
\label{eq:coh=bun*tor}
\end{equation}

Similarly to the second equality of Theorem~\ref{th:Om_K} (observe that the number of absolutely indecomposable torsion sheaves of degree $d>0$ is $\n{X(\bk)}$), we obtain
\[\sum_{d\ge0}\sI^{+,\coh}_{D,\nil}(0,d)\w^d
=\Exp\rbr{\frac{[X]}{q-1}\sum_{d\ge1}\w^d}
=\Exp\rbr{\frac{[X]}{q-1}\cdot\frac{\w}{1-\w}}.\]
This equation together with equations \eqref{E:Isum},
\eqref{eq:coh=bun*tor}
imply
$$
\sum_{r,d} \sI^{+}_{D,\nil}(r,d)\z^r\w^{d}
=\sum_\la(-q^\oh)^{(2g-2-l)\ang{\la,\la}}
J_{\lambda}(\w)H_{\lambda}(\w) \z^{\n\la}
.$$
\end{proof}



\providecommand{\bysame}{\leavevmode\hbox to3em{\hrulefill}\thinspace}
\providecommand{\href}[2]{#2}


\end{document}